\newtheorem{theorem}{Theorem}[section]
\newtheorem{Lemma}[theorem]{Lemma}
\newtheorem{prep}[theorem]{Preposition}
\theoremstyle{definition}
\theoremstyle{remark}
\newtheorem{remark}[theorem]{Remark}
\begin{document}

\title{Closing curves by rearranging arcs}


\author{Leonardo Alese}
\address{TU Graz, Department of Mathematics, Institute of Geometry }
\curraddr{}
\email{alese@tugraz.at}

\subjclass[2010]{Primary 53A04}

\date{\today}

\begin{abstract}
In this paper we show how, under surprisingly weak assumptions, one can split a planar curve into three arcs and rearrange them (matching tangent directions) to obtain a closed curve. We also generalize this construction to curves split into $k$ arcs and comment what can be achieved by rearranging arcs for a curve in higher dimensions. Proofs involve only tools from elementary topology, and the paper is mostly self-contained.
\end{abstract}

\maketitle

\section{Introduction.} 
In this paper we study the problem of splitting a given planar curve into arcs and rearrange them (matching tangent directions) in order to make the curve closed. The interest of our result lies in the counterintuitive nature of the statement and in the simplicity of the proof. Using an argument very similar to that involved in the topological proof of the fundamental theorem of algebra, we will show that a one time differentiable curve with total turning angle a non-zero integer multiple of $2\pi$ can always be split into three arcs that are rearrangeable to a closed curve. 

The operation of joining arcs of curves matching frames at junction points has been considered in various settings, mainly  with the goal of constructing closed curves with certain properties. In \cite{MR2408486} multiple copies of the same planar curve are joined one after another; if the arc-length integral of the curvature is a rational non-integer multiple of $2\pi$ then gluing finitely many copies of the curve will eventually close up the construction to the starting point. In \cite{MR1647587} and \cite{MR2493076} arcs of helices resp. so-called Salkowski curves are joined to obtain a family of closed space curves of constant curvature which are curvature-continuous ($C^2$) resp. $C^3$. In \cite{MR2327737} all types of knots and links are realized as $C^2$ curves of constant curvature by joining arcs of helices.

As for an outline of the contents, in \S \ref{not} we set the notation and recall an elementary topology lemma. In \S \ref{main} we prove that, under very natural assumptions, a $C^1$ planar curve can be split into $3$ arcs that can be rearranged to obtain a closed $C^1$ curve. In \S \ref{more} this construction is extended to permutations of any number of arcs. In \S \ref{highersec} generalizations to higher dimensions are discussed and some possible directions for future work are pointed out.

\section{Notation.} \label{not}

We start by settling the language about some natural topological objects. For any $a,b \in \mathbb{R}$ with $a\leq b$, we call \textit{path} a continuous function $w(t)$ over $[a,b]$ to $\mathbb{R}^2$ and \textit{loop} a path $l$ such that $l(a)=l(b)$. A \textit{contraction} of a loop $l:[a,b] \rightarrow \mathbb{R}^2$ to a point $Q$ is a continuous family $H(h,t)$ defined for $h,t\in[a,b]$ with $h\leq t$ such that $H(a,t)=l(t)$, $H(h,h)= H(h,b)$ for all $h\in[a,b]$ and $H(b,b)=Q$: in particular, each path $t \mapsto H(h,t)$ is a loop defined over $[h,b]$. A loop $l$ is said to be \textit{contractible} in a subset $A$ of $\mathbb{R}^2$ if there exists a contraction of $l$ whose image is contained in $A$. 

A path $w(t)$ over $[a,b]$ that does not contain $P \in \mathbb{R}^2$ can be expressed in \textit{polar coordinates} $(\rho(t),\phi(t))$ with respect to $P$ as $w(t)=P+\rho(t)(\cos \phi(t),\sin \phi(t))$. Since $w$ is continuous, we will always assume $\rho(t)$ and $\phi(t)$ are continuous as well. We call \textit{winding number} of $w$ with respect to $P$ the value $\phi(b)-\phi(a)$. The next is a result from elementary homotopy theory and the proof given here is the usual one contained in topology textbooks; for more on homotopy theory see for example the first chapter of \cite{hatcher}.

\begin{Lemma} \label{contr}
Let $l(t)$ be a loop on $[a,b]$ whose image does not contain $P \in \mathbb{R}^2$. Let $(\rho(t),\phi(t))$ be polar coordinates with respect to $P$ and $\phi(b)-\phi(a)=2k\pi$, $k \in \mathbb{Z}$. If $k \neq 0$, i.e. $l$ has winding number with respect to $P$ different from $0$, then $l$ is not contractible in $\mathbb{R}^2 \setminus \{P\}$.
\end{Lemma}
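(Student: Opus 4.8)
The plan is to argue by contradiction using the contraction $H(h,t)$ together with the polar-coordinate (angle) function, which is the discrete analogue of the argument principle used in the topological proof of the fundamental theorem of algebra. So suppose $l$ is contractible in $\mathbb{R}^2\setminus\{P\}$, and let $H(h,t)$ be a contraction whose image misses $P$. For each fixed $h\in[a,b]$, the loop $t\mapsto H(h,t)$ on $[h,b]$ avoids $P$, so by the remark preceding the lemma it admits continuous polar coordinates $(\rho_h(t),\phi_h(t))$ with respect to $P$; define $W(h)=\phi_h(b)-\phi_h(h)$, the winding number of this loop. The goal is to show $W$ is continuous in $h$ and integer-valued, hence constant; then $W(a)=2k\pi\neq 0$ while $W(b)=0$ since $H(b,\cdot)$ is the constant loop at $Q$, a contradiction.

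First I would take care of the foundational point that the angle function exists and is essentially unique: for a path avoiding $P$, writing $\frac{w(t)-P}{|w(t)-P|}=(\cos\phi(t),\sin\phi(t))$, a continuous lift $\phi$ exists (this is the local-to-global lifting for the covering $\mathbb{R}\to S^1$, done by subdividing $[a,b]$ finely enough that the direction varies by less than $\pi$ on each piece), and any two continuous lifts differ by a constant multiple of $2\pi$; hence $\phi(b)-\phi(a)$ is well defined independently of the choice. For a loop, $w(b)=w(a)$ forces $\phi(b)-\phi(a)\in 2\pi\mathbb{Z}$, which is why $W(h)$ is always an integer multiple of $2\pi$.

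Next I would prove continuity of $h\mapsto W(h)$. The key is to produce the lifts $\phi_h$ in a way that depends continuously on $h$. Since $H$ is continuous on the compact triangle $\{(h,t):a\le h\le t\le b\}$ and avoids $P$, the map $(h,t)\mapsto \frac{H(h,t)-P}{|H(h,t)-P|}\in S^1$ is uniformly continuous, so we can choose a single partition $a=t_0<t_1<\dots<t_N=b$ such that on each strip this unit vector stays within an arc of length $<\pi$ uniformly in $h$; building $\phi_h$ piece by piece from these local branches (restricted appropriately to $[h,b]$, and patching at the left endpoint $t=h$) gives a family $\phi_h(t)$ jointly continuous in $(h,t)$. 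Then $W(h)=\phi_h(b)-\phi_h(h)$ is continuous in $h$. A continuous function on $[a,b]$ taking values in the discrete set $2\pi\mathbb{Z}$ is constant, so $W(a)=W(b)$.

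Finally I would evaluate the two endpoints. At $h=a$, the loop $t\mapsto H(a,t)=l(t)$ has $W(a)=\phi(b)-\phi(a)=2k\pi$ by hypothesis. At $h=b$, the loop $t\mapsto H(b,t)$ is defined on the degenerate interval $[b,b]$ and equals the constant $Q\neq P$, so its angle function is constant and $W(b)=0$; alternatively, for $h$ close to $b$ the loop has tiny diameter (by uniform continuity and $H(b,b)=Q$, $H(b,b)=H(b,b)$) and stays in a half-plane not containing $P$, forcing $W(h)=0$ there. Since $2k\pi=W(a)=W(b)=0$ we get $k=0$, contradicting $k\neq 0$; hence $l$ is not contractible in $\mathbb{R}^2\setminus\{P\}$. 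I expect the main obstacle to be the bookkeeping in the third paragraph — making the choice of local branches uniform in $h$ and checking that the patched lifts $\phi_h$ are genuinely jointly continuous, especially near the diagonal $t=h$ where the domain of the loop is shrinking; everything else is routine.
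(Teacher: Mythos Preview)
Your argument is correct and follows essentially the same route as the paper: assume a contraction exists, lift each loop $t\mapsto H(h,t)$ to a continuous angle function, observe that the resulting winding number is continuous in $h$ and takes values in $2\pi\mathbb{Z}$, hence is constant, and derive a contradiction from $W(a)=2k\pi$ versus $W(b)=0$. You supply considerably more detail on the construction and joint continuity of the lifts than the paper does, but the underlying idea is identical.
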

\begin{proof}
Assume for a contradiction that a contraction $H(h,t)$ exists. For each loop $H(h,t)$ we consider polar coordinates $(\rho_{h}(t),\phi_{h}(t))$ and the integer $k_{h}$ that satisfies $\phi_{h}(b)=\phi_{h}(a)+2k_{h}\pi$. Since a contraction is a continuous function, $\phi_h(t)$ can be chosen to be continuous also in $h$ and such that $\phi_a(t)=\phi(t)$ for all $t$ (we continuously extend the polar coordinates we already have for $l$). This entails that $k_h$ is also continuous and therefore constant as a function whose image is contained in $\mathbb{Z}$. This implies $k_h=k\neq 0$ for all $h$. But this is a contradiction since the image of $H(b,t)$ is a single point and therefore $k_b=0$.
\end{proof} 

We formalize now what is meant by \textit{joining arcs of curves matching frames at junction points}. To do that we introduce the concept of \textit{framed} curve $(\gamma,\mathcal{F})(s)$ of $\mathbb{R}^n$, a pair consisting of a $C^1$ curve $\gamma(s)$ of $\mathbb{R}^n$ with constant speed $c$, namely a differentiable function from some interval $[a,b]$ to $\mathbb{R}^n$ such that $\|\gamma'\| \equiv c>0$, and a positive orthonormal basis $\mathcal{F}(s)=\{f_1(s),f_2(s),...,f_n(s)\}$ of $\mathbb{R}^n$, continuous in $s$ and such that $f_1(s)=\gamma'(s)/c$. An example of a frame for $n=3$ and $\gamma \in C^2$ with everywhere non-zero curvature is the Frenet-Serret frame, obtained by taking $f_2$ as $\gamma''/\|\gamma''\|$ and $f_3$ as the vector product of $f_1$ and $f_2$.

If $(\gamma_1,\mathcal{F}_1)(s_1)$ and $(\gamma_2,\mathcal{F}_2)(s_2)$ are two framed curves parametrized over $[a_1,b_1]$ and $[a_2,b_2]$ with the same constant speed $c$, denoting with $T_{a_2,b_1}$ the rigid motion of $\mathbb{R}^n$ that shifts the point $\gamma_2(a_2)$ to $\gamma_1(b_1)$ and rotates the frame $\mathcal{F}_2(a_2)$ to $\mathcal{F}_1(b_1)$, we define the \textit{concatenation} of the two curves, for  $s \in [0,b_1-a_1+b_2-a_2]$, as
\[
  \gamma_1 * \gamma_2 (s):=
  \begin{cases}
  \gamma_1(s+a_1), &  s \leq b_1-a_1 \\
        T_{a_2,b_1}\gamma_2(\bar{s}(s)), & s > b_1-a_1,
  \end{cases}
\]
where $\bar{s}(s)=s-(b_1-a_1)+a_2$ is the reparametrization achieving $\bar{s}(b_1-a_1)=a_2$. 
Operation $*$ rigidly glues $\gamma_2$ to the end point of $\gamma_1$ by matching frames. It is easy to see that this operation is associative.

\section{Two-cut theorem.} \label{main} In this section let $\gamma(s)$ be a $C^1$ planar curve with constant speed $c$, parametrized over $[0,1]$ and framed with $\mathcal{F}(s)=\{f_1(s),f_2(s)\}$ where $f_2(s)$ is obtained by rotating $f_1(s)=\gamma'(s)/c$ counter-clockwise by $\frac{\pi}{2}$. For convenience we assume without loss of generality $\gamma(0)=(0,0)$ and $f_1(0),f_2(0)$ aligned with the axes of the coordinate system. For any choice of \textit{cuts} $c_1,c_2$ with $0 \leq c_1\leq c_2 \leq 1$, we split the curve into three arcs $\gamma_1$, $\gamma_2$ and $\gamma_3$ respectively parametrized over $[0,c_1]$,$[c_1,c_2]$ and $[c_2,1]$. We define the \textit{rearranged} curve $r_{(c_1,c_2)}:[0,1] \rightarrow \mathbb{R}^2$ as 
 \[
  r_{(c_1,c_2)}:=\gamma_1 * \gamma_3 * \gamma_2.
  \]
Figure \ref{ex} visualizes this construction for different cuts $c_1,c_2$. Intui\-tively this is the curve obtained by swapping the middle arc between parameters $c_1$ and $c_2$ and the tail of the curve between $c_2$ and $1$. The rearrangement is well defined also if one or more arcs $\gamma_i$ degenerate to a point since they still inherit from $\gamma$ the information of a tangent direction.

\begin{figure}
\begin{tikzpicture}
    \node[anchor=south west,inner sep=0] at (0,0) {\scalebox{0.92}{\includegraphics[width=\textwidth]{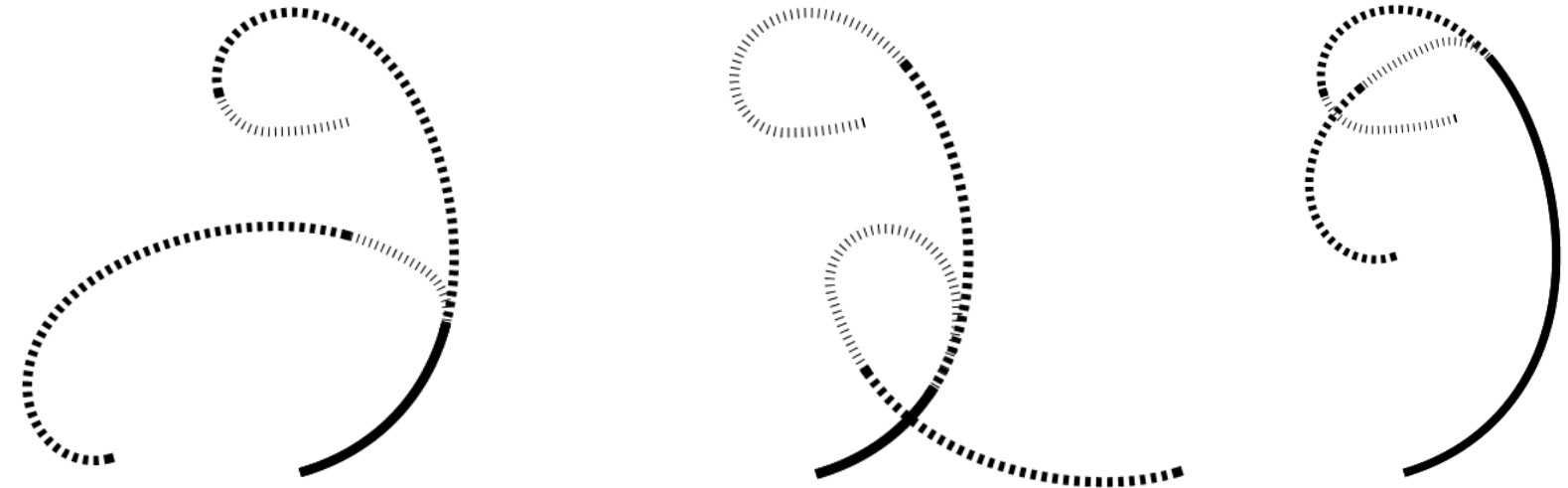}}};
    \node at (3.4,0.6) {$\gamma_1$};
	\node at (3.1,3.4) {$\gamma_2$};
	\node at (2.1,2.9) {$\gamma_3$}; 
\end{tikzpicture}
\caption{Rearrangements of a curve for different cuts. Arcs $\gamma_2$ and $\gamma_3$ are swapped and tangent directions matched.} \label{ex}
\end{figure}

In Theorem \ref{thm1} and Lemma \ref{uglyhyp} we will continuously move the two cuts, while tracking the end point of the rearranged curve, defining this way a family of loops whose properties with respect to contractibility imply the existence of cuts such that $\gamma_1 * \gamma_3 * \gamma_2$ is a closed curve. Two main observations will be needed to follow such a construction and it is worth stressing them before moving to the proofs.
\begin{itemize}
\item For any $c_1 \in [0,1]$ it holds $r_{(c_1,c_1)}(1)=r_{(c_1,1)}(1)=\gamma(1)$, i.e. $c_2\mapsto r_{(c_1,c_2)}(1)$ is a loop defined over $[c_1,1]$.
\item If $c_1=0$, the rearrangement just swaps the only two arcs in which the curve has been split. Under the hypothesis of Theorem \ref{thm1}, $\|r_{(0,c_2)}(1)-r_{(0,c_2)}(0)\|$ will not depend on the choice of $c_2$. 
\end{itemize}
In the following we consider a function $\theta(s)$ such that $\gamma'(s)=c\,(\cos\theta(s),\sin\theta(s))$ and call it \textit{turning angle} function for $\gamma$. We also call $\theta(1)-\theta(0)$ the \textit{total turning angle} of $\gamma$. Besides, we denote with $R_\theta$ the counter-clockwise rotation of angle $\theta$ and center in the origin. 
 
\begin{theorem} [Two-cut theorem] \label{thm1}
Let $\gamma(s)$ be a $C^1$ constant speed planar curve over $[0,1]$ and $\theta(s)$ a turning angle function for $\gamma$. If $\theta(1)-\theta(0)=m 2 \pi$ with $ 0 \neq  m \in \mathbb{Z} $ then there exist cuts $c_1,c_2$ such that the rearranged curve $r_{(c_1,c_2)}$ is a closed $C^1$ curve.
\end{theorem}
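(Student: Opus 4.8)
The plan is to fix $c_1$, look at the loop $c_2 \mapsto r_{(c_1,c_2)}(1)$ (which really is a loop over $[c_1,1]$ by the first bulleted observation), and show that for a suitable choice of $c_1$ this loop winds a nonzero number of times around the basepoint $r_{(c_1,\cdot)}(0)$, which is independent of $c_2$ — then a contractibility obstruction forces the loop to hit its basepoint, giving closure. First I would compute $r_{(c_1,c_2)}(1)$ explicitly. Writing $v_1 = \gamma(c_1)-\gamma(0)$, $v_2 = \gamma(c_2)-\gamma(c_1)$, $v_3 = \gamma(1)-\gamma(c_2)$ for the three chord vectors, concatenation with frame-matching means that after laying down $\gamma_1$ we rotate $\gamma_3$ by $R_{\theta(c_1)-\theta(c_2)}$ (the rotation carrying $f_1(c_2)$ to $f_1(c_1)$) and then $\gamma_2$ by the further rotation $R_{\theta(c_1)-\theta(c_2)}$ again (since after placing $\gamma_3$ the running frame has turned by $\theta(1)-\theta(c_2)$, and it must be matched to $\theta(c_1)$, net $\theta(c_1)-\theta(1)+\theta(c_2)-\ldots$; in any case one gets a clean formula). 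I would package this as
\[
r_{(c_1,c_2)}(1) = \gamma(1) + \bigl(R_{\theta(c_1)-\theta(c_2)} - \mathrm{id}\bigr)\bigl(\gamma(1)-\gamma(c_2)\bigr) \;+\; \bigl(R_{2(\theta(c_1)-\theta(c_2))}-R_{\theta(c_1)-\theta(c_2)}\bigr)\,v_2,
\]
or some such tidy expression — the precise bookkeeping of which rotation acts on which piece is the first thing to pin down carefully.

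Next I would handle the boundary case $c_1 = 0$, which is the heart of the argument. When $c_1=0$ there are only two arcs, $\gamma_2$ over $[0,c_2]$ and $\gamma_3$ over $[c_2,1]$, and $r_{(0,c_2)} = \gamma_3 * \gamma_2$ after the frame-matching rotation $R_{-\theta(c_2)}$ (carrying $f_1(c_2)$ to $f_1(0)$). A direct computation then gives $r_{(0,c_2)}(0)=0$ and $r_{(0,c_2)}(1) = (R_{-\theta(c_2)}-\mathrm{id})\gamma(1) + (\text{something})$; using $\theta(1)-\theta(0)=2m\pi$ so that $R_{-\theta(c_2)}$ composed appropriately with the total-turning rotation is the identity, one finds that $\|r_{(0,c_2)}(1)-r_{(0,c_2)}(0)\|$ equals $\|(R_{-\theta(c_2)}-\mathrm{id})\gamma(1)\|$ up to the full-turn simplification — crucially a quantity whose \emph{magnitude} is independent of $c_2$ (this is exactly the second bulleted observation). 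Call this common value $2\delta$. If $\delta = 0$ we are already done (take $c_1=c_2=0$, or note $\gamma(1)=0$ and the curve is closed), so assume $\delta>0$. The key geometric point is then: as $c_2$ runs over $[0,1]$, the endpoint $r_{(0,c_2)}(1)$ traces a loop lying on the circle of radius $2\delta$ about $0$ — wait, more carefully, it traces a loop in $\mathbb{R}^2$, and I must show this loop has nonzero winding number around $0$. Because the rotation angle $-\theta(c_2)$ (entering quadratically, via the $R_{2(\ldots)}$ term after the full-turn reduction) sweeps through $\theta(0)-\theta(1) = -2m\pi$, i.e. a nonzero integer multiple of $2\pi$, the endpoint genuinely revolves $|m|$ (or $2|m|$) times around the origin, so by Lemma \ref{contr} the loop $c_2 \mapsto r_{(0,c_2)}(1)$ is not contractible in $\mathbb{R}^2\setminus\{0\}$.

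Finally I would run the two-parameter homotopy. Consider $H$ defined on $\{(c_1,c_2): 0\le c_1\le c_2\le 1\}$ by $H(c_1,c_2) = r_{(c_1,c_2)}(1) - r_{(c_1,c_2)}(0)$; note $r_{(c_1,c_2)}(0)=\gamma(0)=0$ always, so really $H(c_1,c_2)=r_{(c_1,c_2)}(1)$. For each fixed $c_1$, the map $c_2\mapsto H(c_1,c_2)$ is a loop (endpoints both equal $\gamma(1)$ by the first observation). Suppose for contradiction that $H$ never vanishes; then $H$ is a contraction (in the precise sense of \S\ref{not}, reparametrizing the triangular domain to the required $[a,b]^2$ shape) of the nonzero-winding loop at $c_1=0$ down to the single point $\gamma(1)$ at $c_1=1$ (where $c_2$ is forced to equal $1$), with image in $\mathbb{R}^2\setminus\{0\}$ — contradicting Lemma \ref{contr}. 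Hence $H(c_1,c_2)=0$ for some cuts, i.e. $r_{(c_1,c_2)}(1)=r_{(c_1,c_2)}(0)$, so the rearranged curve is closed; and it is automatically $C^1$ because concatenation matches frames, hence tangent directions, at every junction. I expect the main obstacle to be the boundary computation at $c_1=0$: verifying cleanly that $\|r_{(0,c_2)}(1)\|$ is constant in $c_2$ and, more delicately, that the loop's winding number about the origin is genuinely nonzero rather than accidentally cancelling — this requires the full-turn hypothesis $\theta(1)-\theta(0)=2m\pi$, $m\neq 0$, to be used in exactly the right place.
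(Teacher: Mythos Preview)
Your overall strategy is exactly the paper's: look at the family of loops $c_2\mapsto r_{(c_1,c_2)}(1)$, show the $c_1=0$ loop has nonzero winding about the origin, and contract via Lemma~\ref{contr}. What is missing is the one computation you yourself flag as ``the main obstacle,'' and your tentative formulas for it are off. The rotation applied to the final piece $\gamma_2$ is not $R_{2(\theta(c_1)-\theta(c_2))}$: after laying down $\gamma_1$ and then $\gamma_3$ (rotated by $R_{\theta(c_1)-\theta(c_2)}$), the running frame sits at angle $\theta(c_1)-\theta(c_2)+\theta(1)$, and matching it to $\gamma_2$'s initial angle $\theta(c_1)$ requires rotating $\gamma_2$ by $R_{\theta(1)-\theta(c_2)}$, not by a doubled angle. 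Hence
\[
r_{(c_1,c_2)}(1)=\gamma(c_1)+R_{\theta(c_1)-\theta(c_2)}\bigl(\gamma(1)-\gamma(c_2)\bigr)+R_{\theta(1)-\theta(c_2)}\bigl(\gamma(c_2)-\gamma(c_1)\bigr).
\]
Now the hypothesis enters: $R_{\theta(1)}=R_{\theta(0)}=\mathrm{id}$, so at $c_1=0$ both rotations collapse to $R_{-\theta(c_2)}$ and the two bracketed chords telescope, giving the clean identity
\[
e(c_2)=r_{(0,c_2)}(1)=R_{-\theta(c_2)}\,\gamma(1).
\]
Thus $e$ lies on the circle of radius $\|\gamma(1)\|$ about the origin with polar angle $-\theta(c_2)$, so its winding number is exactly $-m\neq 0$ (no ``or $2|m|$'' ambiguity). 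From here your contraction argument goes through verbatim.

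One further small gap: frame-matching at the internal junctions gives $C^1$ there automatically, but $C^1$ at the \emph{closing} point is a separate fact --- it holds because rearranging does not change the total turning angle, which remains $m\cdot 2\pi$, so the initial and final tangent directions of $r_{(c_1,c_2)}$ agree.
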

\begin{proof}
Let us consider the loop $e(t):=r_{(0,t)}(1)$. Explicitly,
\begin{align*}
e(t)&=R_{\theta(1)-\theta(t)}(\gamma(t)-\gamma(0))+R_{\theta(0)-\theta(t)}(\gamma(1)-\gamma(t))+\gamma(0) \\
    &=R_{\theta(1)-\theta(t)}(\gamma(t))+R_{-\theta(t)}(\gamma(1)-\gamma(t)) \\
    &= R_{-\theta(t)}\big(R_{\theta(1)}(\gamma(t))-\gamma(t)+\gamma(1)\big) = R_{-\theta(t)}\big(\gamma(1)\big),
\end{align*}
where we have used the equaliy $R_{\theta(1)}=R_{\theta(0)}$, which follows directly from the hypothesis on the total turning angle, and the assumptions $\gamma(0)=(0,0)$, $\gamma'(0)=(c,0)$. This equation provides polar coordinates $(\|\gamma(1)\|,-\theta(t))$ for $e(t)$ whose image turns out to be a circle of radius $\|\gamma(1)\|$ centered in the origin. By hypothesis we have $\theta(1)-\theta(0)=m 2 \pi$ with $ m \neq 0$ and therefore Lemma \ref{contr} guarantees $e$ is not contractible in $\mathbb{R}^2\setminus\{(0,0)\}=\mathbb{R}^2\setminus \{\gamma(0)\}$. Varying $h\in[0,1]$ we obtain a continuous family of loops $r_{(h,t)}(1)$, $t\in [h,1]$, which is in fact a contraction of $e(t)$ to $\gamma(1)$, an operation not possible in $\mathbb{R}^2 \setminus \{(0,0)\}$. Hence it must be a contraction in $\mathbb{R}^2$ whose image contains $(0,0)$, which means there exist $(h,t)$ such that $r_{(h,t)}(1)=(0,0)$, starting point of the curve. Since the total turning angle did not change and it is still an integer multiple of $2\pi$, tangents at the beginning and at the end of the curve match.
\end{proof} 

\begin{figure}
\begin{tikzpicture}
    \node[anchor=south west,inner sep=0] at (0,0) {\scalebox{1}{\includegraphics[width=\textwidth]{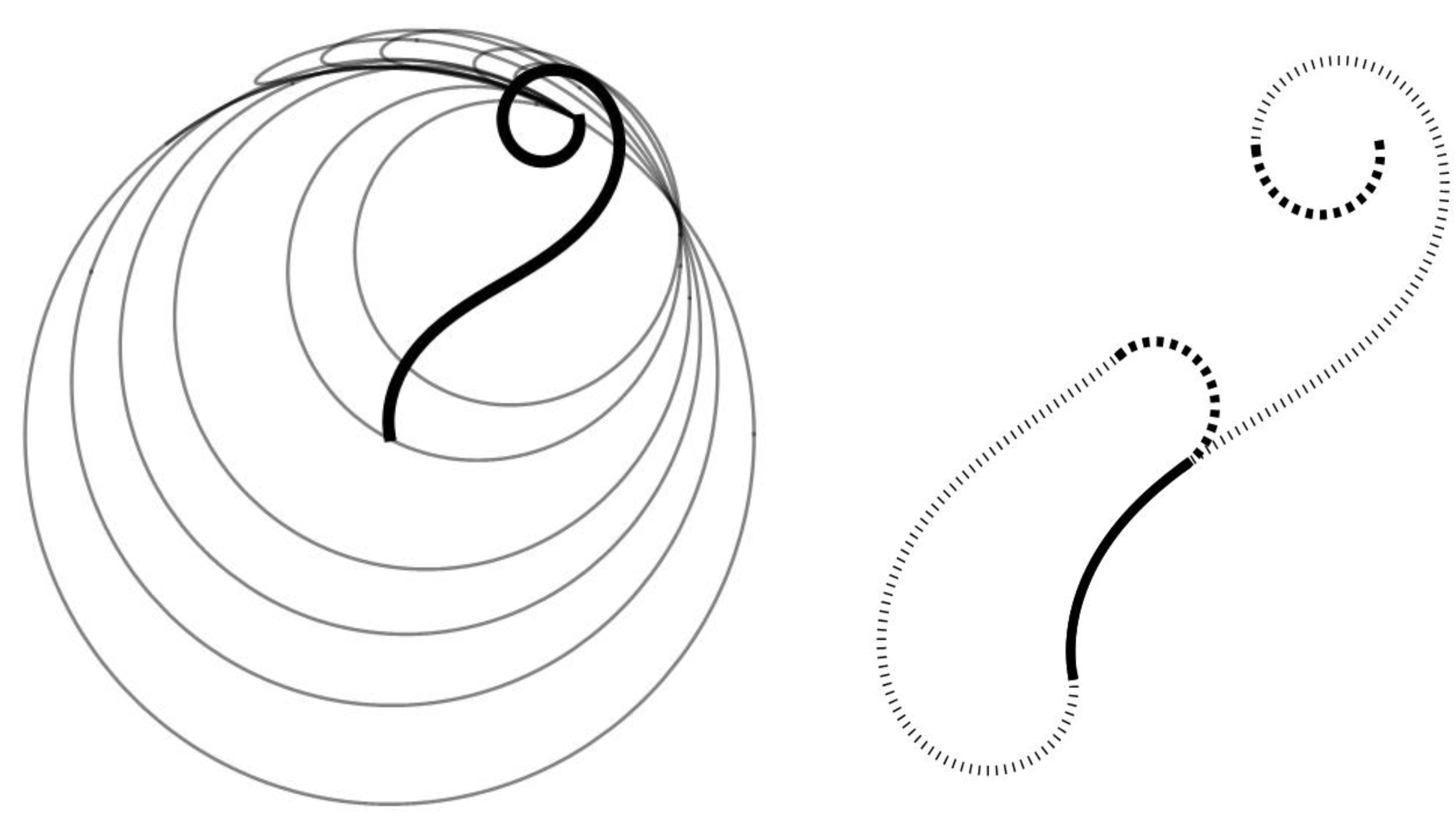}}};
   
\node at (5.8,0.2) {$e=r_{(0,t)}(1)$};
\node at (4.35,2.86) {$r_{(h,t)}(1)$};
\node at (4.95,4.6) {$\gamma$};

\node at (9.85,2) {$\gamma_1$};
\node at (12.35,4) {$\gamma_2$};
\node at (11.54,4.94) {$\gamma_3$};
\node at (8.85,4.09) {$r_{(c_1,c_2)}$};

\node at (10.8,2.9) {$\gamma(c_1)$};
\node at (10.4,5.8) {$\gamma(c_2)$};
\end{tikzpicture}
\caption{For any choice of the first cut $h$, tracking in $t$ the endpoint of the rearranged curve $r_{(h,t)}$ provides a loop. If the total turning angle of $\gamma$ is a non-zero multiple of $2\pi$ loops in this family start as a circle and contract to $\gamma(1)$, therefore passing through the origin and guaranteeing the rearrangeability to a closed curve.} 
\end{figure}

\begin{figure}
\begin{tikzpicture}
    \node[anchor=south west,inner sep=0] at (0,0) {\scalebox{1}{\includegraphics[width=\textwidth]{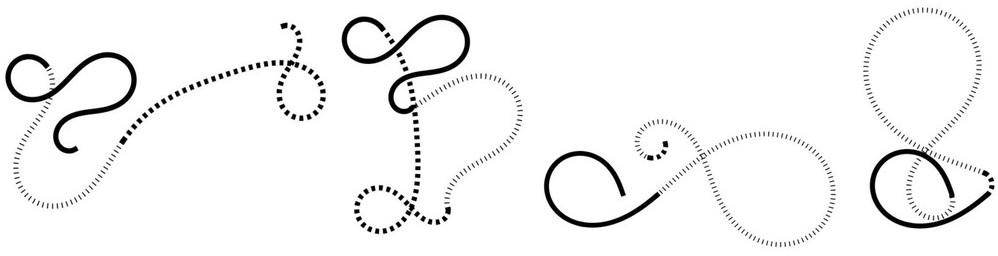}}};
\end{tikzpicture}
\caption{More closed rearrangements for curves whose total turning angle is a non-zero integer multiple of $2\pi$.} \label{star}
\end{figure}

Examples of rearrangements are given in Figure \ref{star}. As mentioned in the introduction, the proof of Theorem \ref{thm1} is similar to the topological proof of the fundamental theorem of algebra. Given a polynomial $p(z)=z^n+a_{n-1}z^{n-1}+...+a_1z+a_0$ with $n>0$, for $\rho\in \mathbb{R}$ large enough the loop $p(\rho e^{it})$, $t \in [0,2\pi]$ winds around the origin $n$ times. If now $\rho$ continuously decreases to $0$ such a loop contracts to $a_0$, which entails the existence of $\rho,t$ such that $\rho e^{it}$ is a root of $p$. A detailed proof can be found in \cite{hatcher}. 

We conclude the section with a lemma, which, using the same techinques of the proof of Theorem \ref{thm1}, generalizes the previous result at the price of a (much) less expressive hypothesis. 

\begin{Lemma} \label{uglyhyp} Let $\gamma(s)$ be a $C^1$ constant speed planar curve over $[0,1]$ and $\theta(s)$ a turning angle function for $\gamma$. If $|\theta(1)-\theta(0)| \geq 2 \pi$ and 
\[
\|\gamma(1)\| \geq  \sqrt{2\big(1-\cos\theta(1)\big)}\max_{s\in[0,1]} \|\gamma(s)\|,
\]
then there exist cuts $c_1,c_2$ such that $r_{(c_1,c_2)}$ is closed, i.e., $r_{(c_1,c_2)}(0)=r_{(c_1,c_2)}(1)$ (not necessarily smooth at the end point).
\end{Lemma}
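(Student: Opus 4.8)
The plan is to run the argument of Theorem~\ref{thm1} almost verbatim: the only change is that the loop $e(t):=r_{(0,t)}(1)$ is no longer a circle centred at $\gamma(0)=(0,0)$, and the size hypothesis is precisely what still forces it to wind a nonzero number of times around the origin, which is all the contraction argument actually uses.

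First I would recompute $e(t)$ exactly as in the proof of Theorem~\ref{thm1}, but stopping one line before the identity $R_{\theta(1)}=R_{\theta(0)}$ is invoked (that identity is no longer available). This gives
\[
e(t)=R_{-\theta(t)}\bigl((R_{\theta(1)}-I)\gamma(t)+\gamma(1)\bigr).
\]
Write $v(t):=(R_{\theta(1)}-I)\gamma(t)+\gamma(1)$, so that $e(t)=R_{-\theta(t)}v(t)$ and $\|e(t)\|=\|v(t)\|$. The elementary fact I need is that $R_{\theta(1)}-I$ equals $\sqrt{2(1-\cos\theta(1))}$ times a rotation, hence $\|(R_{\theta(1)}-I)x\|=\sqrt{2(1-\cos\theta(1))}\,\|x\|$ for every $x\in\mathbb{R}^2$. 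Combined with the hypothesis this yields $\|(R_{\theta(1)}-I)\gamma(t)\|\le\|\gamma(1)\|$ for all $t$, so $v(t)$ lies in the closed disc of radius $\|\gamma(1)\|$ about $\gamma(1)$, a disc tangent to the origin. Consequently $\langle v(t),\gamma(1)\rangle\ge0$ for all $t$, with equality only if $v(t)=(0,0)$.

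Now I would split into cases. If $\gamma(1)=(0,0)$ the curve is already closed. If $v(t_0)=(0,0)$ for some $t_0$, then $e(t_0)=r_{(0,t_0)}(1)=(0,0)=r_{(0,t_0)}(0)$ and the cuts $(0,t_0)$ already close $\gamma$. In the remaining case $v$, hence $e$, never vanishes and $v$ stays in the open half-plane $\{x:\langle x,\gamma(1)\rangle>0\}$; picking a continuous argument (with $\arg v(0)=\arg\gamma(1)$, since $v(0)=\gamma(1)$) it stays in the length-$\pi$ interval centred at $\arg\gamma(1)$, so it varies by less than $\tfrac\pi2$ over $[0,1]$, while $\arg R_{-\theta(t)}$ varies by $-(\theta(1)-\theta(0))$, of absolute value at least $2\pi$. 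Hence the continuous argument $\phi(t)$ of $e(t)$ satisfies $|\phi(1)-\phi(0)|>2\pi-\tfrac\pi2>0$; since $e$ is a loop (recall $e(0)=e(1)=\gamma(1)$) this difference lies in $2\pi\mathbb{Z}$, so the winding number of $e$ about the origin is a nonzero integer. From here I copy the end of the proof of Theorem~\ref{thm1} word for word: by Lemma~\ref{contr} the loop $e$ is not contractible in $\mathbb{R}^2\setminus\{(0,0)\}$, while $r_{(h,t)}(1)$, $h\in[0,1]$, $t\in[h,1]$, is a contraction of $e$ to $\gamma(1)$, so its image meets the origin, producing cuts $(h,t)$ with $r_{(h,t)}(1)=(0,0)=r_{(h,t)}(0)$. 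Smoothness at the junction is lost because the total turning angle is not assumed to be a multiple of $2\pi$.

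The only real content, and the step I expect to need the most care, is the middle paragraph: spotting the auxiliary vector $v(t)$ and recognising that the displayed inequality is exactly the statement that $v(t)$ cannot leave a half-plane through the origin, so that the $\ge2\pi$ of accumulated turning carried by $R_{-\theta(t)}$ cannot be undone by the motion of $v$. Everything else is a transcription of Theorem~\ref{thm1}, and the short case analysis at the start of the third paragraph is only there to guarantee that $e$ is a genuine loop avoiding the origin before Lemma~\ref{contr} is applied.
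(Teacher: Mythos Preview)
Your proof is correct and follows essentially the same route as the paper: both compute $e(t)=R_{-\theta(t)}\bigl((R_{\theta(1)}-I)\gamma(t)+\gamma(1)\bigr)$, use $\|(R_{\theta(1)}-I)x\|=\sqrt{2(1-\cos\theta(1))}\,\|x\|$ together with the hypothesis to confine $v(t)$ to the disc of radius $\|\gamma(1)\|$ about $\gamma(1)$, deduce that the argument of $v$ cannot drift by more than $\pi/2$ while $-\theta(t)$ moves by at least $2\pi$, and then invoke Lemma~\ref{contr} and the contraction $r_{(h,t)}(1)$ exactly as in Theorem~\ref{thm1}. Your explicit case split ($\gamma(1)=0$, $v(t_0)=0$, $v$ never vanishing) is in fact slightly more careful than the paper, which tacitly assumes the ball ``does not contain the origin''; your treatment cleanly handles the borderline case where the closed disc is tangent to the origin.
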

\begin{proof}
Again we look at $e(t)=R_{-\theta(t)}\big(R_{\theta(1)}(\gamma(t))-\gamma(t)+\gamma(1)\big)$.
After computing $\|R_{\theta(1)}(\gamma(t))-\gamma(t)\|=\sqrt{2(1-\cos\theta(1))}\|\gamma(t)\|$ our hypothesis entails that $R_{\theta(1)}(\gamma(t))-\gamma(t)+\gamma(1)$ is contained in a ball centered in $\gamma(1)$ which does not contain the origin. If $(\rho(t),\phi(t))$ are polar coordinates for $e(t)$ with respect to the origin this implies $\phi(1)-\phi(0)\in(\theta(0)-\theta(1)-\frac{\pi}{2},\theta(0)-\theta(1)+\frac{\pi}{2})$. Since $e$ is a loop and $|\theta(1)-\theta(0)| \geq 2 \pi$ we get $\phi(1)-\phi(0)=2m_e\pi$ with $0\neq m_e \in \mathbb{Z}$ and Lemma \ref{contr} guarantees again that $e$ is not contractible in $\mathbb{R}\setminus\{\gamma(0)\}$. Varying $h\in[0,1]$ we define again a continuous family of loops $r_{(h,t)}(1)$, $t\in [h,1]$, which contracts to $\gamma(1)$, and we conclude as in the proof of Theorem \ref{thm1}.
\end{proof}

Note that the argument of Theorem \ref{thm1} and Lemma \ref{uglyhyp} still works if we want to reach, as the end point of the rearranged curve, any point ``inside'' the loop $e(t)$, that is any point with respect to which $e$ has winding number different from $0$. Moreover, the winding number itself provides a lower bound on the number of possible different rearrangements.  

\begin{remark}
The conditions we used on the turning angle are sufficient but not necessary for rearrangeability. It is easy to find examples of curves with total turning angle $0$, whose associated loop $e$ is not surjective onto the circle in which it is contained, but that, in spite of the failure of the contraction argument, can still be rearranged to closed curves. On the other hand, surjectivity of $\gamma'/c$ onto $S^1$ is not sufficient to guarantee that a curve is rearrangeable to a closed one; if arcs of circles and line segments are arranged as in Figure \ref{surj}, the end point $r_{(h,t)}(1)$ does not coincide with the origin for any choice of cuts.
\end{remark}

\begin{figure}
\begin{tikzpicture}
    \node[anchor=south west,inner sep=0] at (0,0) {\scalebox{1}{\includegraphics[width=\textwidth]{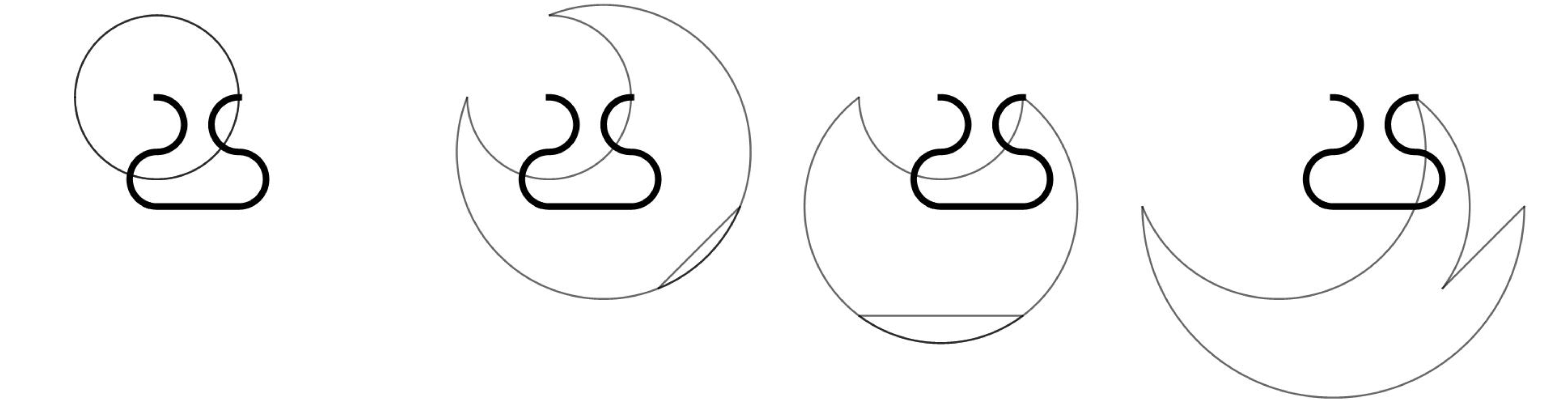}}};
    \node at (5.4,0.6) {$r_{(h,t)}(1)$};
\end{tikzpicture}
\caption{In bold a curve whose tangent is surjective onto $S^1$ but no 3 arcs can be rearranged to obtain a closed curve. The four images show loops in the family $r_{(h,t)}(1)$ for different values of $h$. Such a family opens up without passing through the origin.} \label{surj}
\end{figure}

\section{More permutations.} \label{more}
We break now the curve into $k \geq 2$ arcs by chosing cuts in the set 
$$
D_k:= \{ (c_1,c_2,...,c_{k-1}) \in [0,1]^{k-1}, \; 0 \leq c_1 \leq c_2 \leq ... \leq c_{k-1} \leq 1 \}.
$$
In the following, for notation convenience we will refer a few times also to the $0$th and $k$-th component of a string of cuts $C\in D_k$, which we define by setting $c_0:=0$ and $c_k:=1$. Given $C \in D_k$, we split a framed curve $(\gamma,\mathcal{F})$ over $[0,1]$ into $\gamma_1,\gamma_2,...,\gamma_k$ respectively defined over $[0,c_1],[c_1,c_2],..., \allowbreak [c_{k-1},1]$. For any element $\sigma$ of the permutation group $S_k$ of the indices $\{1,2,...,k\}$, we define
\[
r_{\sigma,C}:= \gamma_{\sigma(1)}*\gamma_{\sigma(2)}*...*\gamma_{\sigma(k)}.
\]
In order to compare more easily rearranged curves from different permutations, in the following we will implicitely assume that $r_{\sigma,C}$ is also shifted and rotated such that its starting point and frame coincide with the origin and the axes of the coordinate system. We also define
$$
e_\sigma: D_k \rightarrow \mathbb{R}^n, \;\;\;\;\;\; e_\sigma(C):= r_{\sigma,C}(1),
$$
which is the (continuous) map from the space of admissible cuts to the end point of the rearranged curve.
We call a curve $(k,j)$\textit{-rearrangeable with respect to a permution} $\sigma \in S_k$ if there exist cuts $C\in D_k$ such that $r_{\sigma,C}$ is a $C^j$ closed curve. Theorem \ref{thm1} and Lemma \ref{uglyhyp} from \S \ref{main} give conditions that guarantee a curve is $(3,1)$-rearrangeable resp. $(3,0)$-rearrangeable with respect to the permutation $(23)$ of $S_3$. Note that asking for higher regularity, i.e. $j>1$, greatly restricts the set of admissible cuts.

We now want to move towards a full characterization of the permutations $\sigma \in S_k$ with respect to which a planar curve can be rearranged. In particular, we will see that this characterization is the same as that of permutations $\sigma \in S_k$ with respect to which a curve can be \textit{properly} rearranged, meaning with this that no arc of the rearranged closed curve $r_{\sigma,C}$ degenerates to a point (in other words in a proper rearrangement we ask for $C$ to be contained in the interior of the set of admissible cut $D_k$, denoted with $\text{int}(D_k)$ in the following). A major role will be played by the subgroup of cyclic shifts of $S_k$,
$$
Z_k:=\{ z_h\}_{h\in\{0,1,...,k-1\}} \subseteq S_k \;\; \text{with} \;\; z_h(i)= \begin{cases}
    i+h, & i \leq k-h \\
    i+h-k , & i > k-h.  \\
  \end{cases}
$$
By the end of the section we will have proven the following theorem. 
\begin{theorem} \label{char}
Let $\gamma(s)$ be a $C^1$ constant speed non-closed planar curve over $[0,1]$, whose turning angle function $\theta(s)$ satisfies $\theta(1)-\theta(0)=m 2 \pi$ with $ 0 \neq  m \in \mathbb{Z} $. If $\sigma \in S_k$ with $3 \leq k$, then there exist cuts $C\in \text{int}(D_k) $ such that the rearranged curve $r_{\sigma,C}$ is closed $C^1$ if and only if $\sigma \in S_k \setminus Z_k$, i.e. $\sigma$ is no cyclic shift.
\end{theorem}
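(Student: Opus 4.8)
The plan is to prove the two implications separately, the ``only if'' by a direct computation and the ``if'' by a winding‑number argument in the spirit of Theorem \ref{thm1}.

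\emph{Only if.}\quad Suppose $\sigma=z_h$ is a cyclic shift and let $C\in D_k$ be arbitrary. In the concatenation $\gamma_{z_h(1)}*\cdots*\gamma_{z_h(k)}=\gamma_{h+1}*\cdots*\gamma_k*\gamma_1*\cdots*\gamma_h$ every junction except the one between $\gamma_k$ and $\gamma_1$ occurs between arcs that were already consecutive in $\gamma$, hence is realized by the identity rigid motion; the remaining junction is realized by the rigid motion carrying $(\gamma(0),\mathcal F(0))$ to $(\gamma(1),\mathcal F(1))$, whose rotational part is the rotation by $\theta(1)-\theta(0)=2m\pi$, i.e.\ the identity, and whose translational part is $\gamma(1)-\gamma(0)$, nonzero because $\gamma$ is not closed. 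Thus $r_{\sigma,C}$ is $\gamma|_{[c_h,1]}$ followed by the translate of $\gamma|_{[0,c_h]}$ by $\gamma(1)-\gamma(0)$, and the difference of its endpoints is $\gamma(1)-\gamma(0)\neq 0$; so $r_{\sigma,C}$ is closed for no $C$, in particular for no $C\in\mathrm{int}(D_k)$.

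\emph{If.}\quad Assume $\sigma\notin Z_k$. For $1\leq j\leq k$ set $V_j:=(\underbrace{0,\dots,0}_{j-1},\underbrace{1,\dots,1}_{k-j})\in D_k$; at $V_j$ every arc but $\gamma_j$ is degenerate and $\gamma_j$ is a rigid copy of all of $\gamma$, so $e_\sigma(V_j)=\gamma(1)$. For $i<j$ let $L_{ij}\subset D_k$ be the path on which $c_1,\dots,c_{i-1}$ stay $0$, the cuts $c_i,\dots,c_{j-1}$ move together through a common value $t\in[0,1]$, and $c_j,\dots,c_{k-1}$ stay $1$; it joins $V_j$ (at $t=0$) to $V_i$ (at $t=1$), and along it the only non-degenerate arcs are $\gamma|_{[0,t]}$ and $\gamma|_{[t,1]}$, occurring in $r_{\sigma,C}$ in the order prescribed by $\sigma^{-1}$ on $\{i,j\}$. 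By the computation in the proof of Theorem \ref{thm1} (which uses $\gamma(0)=0$ and $R_{\theta(1)}=R_{\theta(0)}$): if $\sigma^{-1}(i)<\sigma^{-1}(j)$ that rearrangement is $\gamma$ itself, so $e_\sigma\equiv\gamma(1)$ on $L_{ij}$; if $\sigma^{-1}(i)>\sigma^{-1}(j)$ it is the swap, so $e_\sigma=R_{-\theta(t)}(\gamma(1))$, which runs once around the circle of radius $\|\gamma(1)\|>0$ as $t$ runs over $[0,1]$.

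Now let $\ell$ be the loop in $D_k$ that goes $V_1\to V_2$ along $L_{12}$, then $V_2\to V_3$ along $L_{23}$, \dots, $V_{k-1}\to V_k$ along $L_{k-1,k}$, and finally $V_k\to V_1$ along $L_{1k}$. Since $\theta(1)=\theta(0)+2m\pi$, a leg $V_i\to V_{i+1}$ contributes $m$ to the winding number of $e_\sigma\circ\ell$ about $0$ when $\sigma^{-1}(i)>\sigma^{-1}(i+1)$ and $0$ otherwise, while the last leg $V_k\to V_1$ contributes $-m$ when $\sigma^{-1}(1)>\sigma^{-1}(k)$ and $0$ otherwise; writing $d$ for the number of indices $i$ with $\sigma^{-1}(i)>\sigma^{-1}(i+1)$ and $\varepsilon\in\{0,1\}$ for the indicator of $\sigma^{-1}(1)>\sigma^{-1}(k)$, the total winding number is $m(d-\varepsilon)$. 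An elementary check shows $d=\varepsilon$ holds precisely when $\sigma^{-1}$ — equivalently $\sigma$ — is a cyclic shift: $d=\varepsilon=0$ forces $\sigma^{-1}=\mathrm{id}$, and $d=1$ with $\sigma^{-1}(1)>\sigma^{-1}(k)$ forces the value sets of the two increasing runs of $\sigma^{-1}$ to be an upper and a lower interval of $\{1,\dots,k\}$, i.e.\ $\sigma^{-1}\in Z_k$. As $\sigma\notin Z_k$, the winding number $m(d-\varepsilon)$ is nonzero, and $e_\sigma\circ\ell$ lies on the circle $\|\,\cdot\,\|=\|\gamma(1)\|>0$, hence misses the origin. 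Since $D_k$ is contractible $\ell$ is null-homotopic in $D_k$; were $e_\sigma$ to avoid $0$ on all of $D_k$, then $e_\sigma\circ\ell$ would be null-homotopic in $\mathbb R^2\setminus\{0\}$, contradicting Lemma \ref{contr}. Hence $e_\sigma(C)=0$ for some $C\in D_k$, so $r_{\sigma,C}$ closes up; by the total-turning argument of Theorem \ref{thm1} it closes up in $C^1$ fashion. Finally, pushing $\ell$ slightly into $\mathrm{int}(D_k)$ changes $e_\sigma\circ\ell$ only slightly, so it still misses $0$ and still has winding number $m(d-\varepsilon)\neq0$; the same argument applied to the contractible set $\mathrm{int}(D_k)$ then produces a closing $C\in\mathrm{int}(D_k)$.

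The delicate point is the construction and winding count of the loop $\ell$: this is exactly where $\sigma\notin Z_k$ is used, through the identity ``winding $=m(d-\varepsilon)$, which vanishes iff $\sigma\in Z_k$''. A single two-piece degeneration only yields a path whose naive closures either cancel the winding or merely retrace the circle, and shrinking to three arcs and invoking Theorem \ref{thm1} directly does not always work (for instance $\sigma=(2\,4\,6\,1\,3\,5)$ degenerates to a cyclic shift of any three of its pieces); looping through all of the degenerate vertices $V_1,\dots,V_k$ is what makes the count come out right.
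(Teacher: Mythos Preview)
Your proof is correct and takes a genuinely different route from the paper's.

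The paper proceeds by induction on $k$: after composing with a cyclic shift so that $\sigma(1)=1$, a combinatorial lemma (Lemma~\ref{ktokminus1}) shows that one can always collapse a single arc and land in $S_{k-1}\setminus Z_{k-1}$; iterating down to $S_3$ invokes Proposition~\ref{sum}, and then an explicit ``inflation'' map $I:D_3\to D_k$ is used to push the three non-degenerate cuts into $\mathrm{int}(D_k)$ while preserving the winding-number argument of Theorem~\ref{thm1}.

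You instead construct a single explicit loop $\ell$ along the edge-path $V_1\to V_2\to\cdots\to V_k\to V_1$ of the simplex $D_k$ and compute the winding number of $e_\sigma\circ\ell$ directly: each ascending leg contributes $0$ and each descending leg contributes $\pm m$, giving total $m(d-\varepsilon)$. This quantity equals $m$ times one less than the number of \emph{cyclic} descents of $\sigma^{-1}$, and it is a pleasant combinatorial fact that a permutation has exactly one cyclic descent if and only if it is a cyclic shift. Your argument thus replaces the inductive reduction and the bookkeeping of the maps $F_i$ by a single global count; the push into $\mathrm{int}(D_k)$ is then a one-line continuity remark rather than an explicit inflation. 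The paper's approach, on the other hand, makes the connection to the $k=3$ case very concrete and yields a more constructive recipe for the cuts.

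One small remark: the closing commentary about $\sigma=(2\,4\,6\,1\,3\,5)$ is not quite right as stated (for instance, degenerating arcs $4,5,6$ leaves the transposition $(12)$ on the remaining three), but this is extraneous to your argument and does not affect its validity.
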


The next two lemmas explain how $Z_k$ is relevant to our purposes. For the rest of the section, as in the hypothesis of Theorem \ref{char}, we will always assume $\gamma(s)$ is a $C^1$ constant speed non-closed planar curve defined over $[0,1]$, whose turning angle function $\theta(s)$ satisfies $\theta(1)-\theta(0)=m 2 \pi$ with $ 0 \neq  m \in \mathbb{Z} $. 
\begin{Lemma} \label{notrearr}
Let $\gamma$ be a curve as described in the previous paragraph. If $z_h \in Z_k$ then there exist no cuts $C \in D_k$ such that $r_{z_h,C}$ is closed. 
\end{Lemma}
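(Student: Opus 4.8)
The plan is to show that a cyclic shift $z_h$ produces a rearranged curve whose endpoint is forced onto a circle of radius $\|\gamma(1)\|$ about the origin, hence never at the origin, so $r_{z_h,C}$ cannot close up. The key observation is that a cyclic shift, unlike a general permutation, merely cuts the curve once at $c_h$ and swaps the tail with the head: $r_{z_h,C} = \gamma_{h+1}*\gamma_{h+2}*\cdots*\gamma_k*\gamma_1*\cdots*\gamma_h$, and since concatenation is associative and the intermediate cuts $c_1,\dots,c_{h-1},c_{h+1},\dots,c_{k-1}$ merely subdivide arcs that are glued back consecutively, the result depends only on $c_h$. Thus $r_{z_h,C}$ coincides with the two-arc rearrangement $r_{(0,c_h)}$ of the ``$c_1=0$'' case analyzed in \S\ref{main}.

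Concretely, I would compute the endpoint directly as in the proof of Theorem \ref{thm1}. Writing $t=c_h$, after shifting so the starting point and frame are at the origin and the axes, the endpoint of $r_{z_h,C}$ is
\[
R_{\theta(1)-\theta(t)}\big(\gamma(t)-\gamma(0)\big)+R_{\theta(0)-\theta(t)}\big(\gamma(1)-\gamma(t)\big)+\gamma(0),
\]
which, using $\gamma(0)=(0,0)$, $\gamma'(0)=(c,0)$, and $R_{\theta(1)}=R_{\theta(0)}$ (a consequence of $\theta(1)-\theta(0)=m2\pi$), collapses exactly as in Theorem \ref{thm1} to $R_{-\theta(t)}\big(\gamma(1)\big)$. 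Hence $\|r_{z_h,C}(1)\| = \|\gamma(1)\|$. Since $\gamma$ is non-closed, $\gamma(1)\neq\gamma(0)=(0,0)$, so $\|\gamma(1)\|>0$ and the endpoint never equals $(0,0)$; therefore no choice of $C\in D_k$ makes $r_{z_h,C}$ closed.

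The main thing to get right is the bookkeeping showing that a cyclic shift genuinely reduces to a single cut, i.e. that the extra cut points disappear under concatenation. This is where associativity of $*$ does the work: for $z_h$, consecutive arcs $\gamma_i * \gamma_{i+1}$ for $i\neq h$ reassemble (after the rigid motion $T$, which here is the identity transition because the frames match by construction of the original framing) into the single arc of $\gamma$ over $[c_{i-1},c_{i+1}]$. So the only genuine degree of freedom is $c_h$, and everything else follows from the computation above. I would also remark that this lemma already gives the ``only if'' direction of Theorem \ref{char}, since it shows cyclic shifts are never rearrangeable to a closed curve regardless of the cuts.
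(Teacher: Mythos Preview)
Your proof is correct and follows essentially the same route as the paper: reduce the cyclic shift to a single-cut two-arc swap (the paper writes this as $r_{z_h,C}=r_{(12),\bar C}$ with $\bar C=(c_h)$, which is the same object as your $r_{(0,c_h)}$), and then invoke the endpoint computation from Theorem~\ref{thm1} showing the endpoint lies on the circle of radius $\|\gamma(1)\|>0$ about the origin. The paper states this in one sentence by referring back to \S\ref{main}; you unpack the computation explicitly, but the argument is the same.
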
 
\begin{proof}
For any cuts, $r_{z_h,C}$ is the same curve as the one obtained by swapping just two arcs partitioning the curve. More precisely, if $C=(c_1,c_2,...,c_{k-1})$, it holds $r_{z_h,C}=r_{(12),\bar{C}}$ with $\bar{C}=(c_{h})$, and we have already observed in \S \ref{main} that the distance from the origin of the end point of such a curve is independent from the choice of the cut and different from $0$.
\end{proof}

\begin{Lemma} \label{cyclic}
Let $\gamma$ be a curve as described above. If $\sigma \in S_k$ and $z_h \in Z_k $, then $\gamma$ is $(k,1)$-rearrangeable with respect to $\sigma$ if and only if $\gamma$ is $(k,1)$-rearrangeable with respect to the composition $\sigma \cdot z_h$ of these two permutations.
\end{Lemma}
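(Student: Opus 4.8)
The plan is to reduce the claim to the observation that post-composing a permutation by a cyclic shift $z_h$ corresponds, at the level of the rearranged curve, to a cyclic rotation of the \emph{blocks} of arcs, and that such a cyclic rotation of blocks is itself realizable as an ordinary rearrangement of the original curve (after reindexing the cuts). Concretely, I would start by unwinding the definitions: $r_{\sigma\cdot z_h, C} = \gamma_{\sigma(z_h(1))} * \gamma_{\sigma(z_h(2))} * \cdots * \gamma_{\sigma(z_h(k))}$, and since $z_h$ is the cyclic shift sending $i \mapsto i+h \pmod k$, the sequence $(\sigma(z_h(1)), \dots, \sigma(z_h(k)))$ is just the sequence $(\sigma(1), \dots, \sigma(k))$ read starting from position $h+1$ and wrapping around. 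So $r_{\sigma \cdot z_h, C}$ is a \emph{cyclic permutation of the concatenation blocks} of $r_{\sigma, C}$: if we write $A_1 * A_2$ for the concatenation where $A_1 = \gamma_{\sigma(1)} * \cdots * \gamma_{\sigma(h)}$ and $A_2 = \gamma_{\sigma(h+1)} * \cdots * \gamma_{\sigma(k)}$, then $r_{\sigma,C} = A_1 * A_2$ while $r_{\sigma \cdot z_h, C} = A_2 * A_1$.

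The key geometric fact I would then record is that for any two framed curves $A_1, A_2$ of the same constant speed, $A_1 * A_2$ is closed if and only if $A_2 * A_1$ is closed, and moreover the $C^1$ (indeed $C^j$) regularity at the new junction is preserved. This is essentially the statement that ``closedness is a cyclic property of a concatenation'': $A_1 * A_2$ closed means (after the normalizing rigid motion) that applying the rigid motion $T$ induced by traversing $A_1$ then $A_2$ returns the frame to the identity; applying $T$ is the composition of the rigid motion induced by $A_1$ with that induced by $A_2$, and swapping the order of this composition conjugates it, hence it is the identity for one order exactly when it is for the other. The smoothness at junctions is unaffected because the only junction that changes is between $\gamma_{\sigma(h)}$ and $\gamma_{\sigma(h+1)}$ (which in $A_2 * A_1$ becomes an interior junction of the unchanged block, while the seam between $A_2$ and $A_1$ — previously the closing-up point — and the new closing-up point are governed by the same frame-matching condition that held before). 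One should phrase this carefully: since $\gamma$ has total turning angle a multiple of $2\pi$, the frame returns to its starting position after the full traversal regardless of arc order, so tangent-matching at \emph{all} junctions of $r_{\sigma\cdot z_h,C}$ holds whenever the endpoint condition $r_{\sigma\cdot z_h,C}(1) = r_{\sigma\cdot z_h,C}(0)$ holds — exactly as in Theorem~\ref{thm1}.

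With this in hand the lemma is immediate: given cuts $C$ with $r_{\sigma,C}$ closed $C^1$, the block-swap identity gives that $r_{\sigma\cdot z_h, C}$ equals $A_2 * A_1$, which is closed $C^1$ by the cyclic-closedness fact, so the \emph{same} cuts $C$ witness $(k,1)$-rearrangeability with respect to $\sigma \cdot z_h$. The converse follows by applying the forward direction to the permutation $\sigma \cdot z_h$ and the cyclic shift $z_{k-h} = z_h^{-1}$, since $(\sigma \cdot z_h) \cdot z_h^{-1} = \sigma$ and $Z_k$ is a group.

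I expect the main obstacle to be the bookkeeping in the smoothness claim: one must be honest that the concatenation operation $*$ already builds in frame-matching at every internal junction, so the only thing ``at risk'' when we cyclically rotate blocks is the junction at the rotation point and the closing-up point, and both are controlled by the hypothesis that the total turning angle is $m\cdot 2\pi$ (which makes the accumulated rotation trivial) — this is precisely the mechanism used at the end of the proof of Theorem~\ref{thm1}, so I would simply invoke it rather than re-derive it. A minor point to handle cleanly is the normalizing rigid motion (``shifted and rotated such that the starting point and frame coincide with the origin and the axes'') that the excerpt builds into the definition of $r_{\sigma,C}$: this normalization commutes with the reasoning above because closedness and regularity are rigid-motion invariant, so it introduces no difficulty, only a sentence of remark. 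I would also note in passing that the argument never touches the interior-vs-boundary distinction on $D_k$, so the same statement holds verbatim for \emph{proper} rearrangeability, which is what makes this lemma useful for the proof of Theorem~\ref{char}.
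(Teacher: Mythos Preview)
Your proposal is correct and follows essentially the same approach as the paper: the core observation is that $r_{\sigma\cdot z_h,C}$ is a cyclic shift of the arc sequence of $r_{\sigma,C}$, and a closed $C^1$ curve remains closed $C^1$ when its starting point is cyclically shifted, so the \emph{same} cuts $C$ witness rearrangeability for both permutations. Your write-up is considerably more detailed than the paper's two-sentence proof (and frames the closedness-under-cyclic-shift fact algebraically via the group identity $T_1T_2=I\Leftrightarrow T_2T_1=I$ rather than purely geometrically), but the underlying idea is identical.
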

\begin{proof}
As already pointed out, $z_h$ cyclically shifts all elements by the same integer $h$. Once the curve is rearranged to a closed one, it remains closed if the sequence of arcs is cyclically shifted and the same cuts $C \in D_k$ are used. 
\end{proof}

Observing that $(123),(132)\in Z_3$ and that $(13)=(23)\cdot(123)$, $(12)=(23)\cdot(132)$ we conclude the following characterization of $(3,1)$-rearrangeability.
\begin{prep} \label{sum}
Let $\gamma(s)$ be a $C^1$ constant speed non-closed planar curve over $[0,1]$, whose turning angle function $\theta(s)$ satisfies $\theta(1)-\theta(0)=m 2 \pi$ with $ 0 \neq  m \in \mathbb{Z} $. Let also $\sigma$ be a permutation of $\{1,2,3\}$, then $\gamma$ is $(3,1)$-rearrangeable with respect to $\sigma$ if and only if $\sigma$ is a transposition.
\end{prep}

At this point we can make clear how we want to tackle the proof of Theorem \ref{char}. The plan is to first drop the properness constraint and show that some arcs can be collapsed to reduce the problem to a permutation on $S_3$. After that we will conclude by observing that the topological argument we used for $S_3$ is robust to perturbations and can be adapted to cuts where the degnerate arcs are inflated a little bit to guarantee properness. 

When two cuts in $D_k$ coincide and an arc degenerates we can relabel the indices, inducing a permutation on $S_{k-1}$. For $i\leq k$, $\sigma \in S_k$, we define $F_i(\sigma)\in S_{k-1}$ by
\[
  F_i(\sigma)(j) =
  \begin{cases}
    \sigma(j), & j<i,\sigma(j)<\sigma(i) \\
    \sigma(j)-1, & j<i,\sigma(j)>\sigma(i) \\
    \sigma(j+1), & j\geq i,\sigma(j+1)<\sigma(i) \\
    \sigma(j+1)-1, & j\geq i,\sigma(j+1)>\sigma(i),
  \end{cases}
\]
where $j\in \{1,2,...,k-1\}$. This definition by cases might be not particularly expressive, while Fig. \ref{relabel} is the better tool to understand the combinatorial meaning of this relabelling. $F_i$'s are not group homomorphisms as shown for example in $S_5$ by taking $i=2$, $\sigma_1=(124)(35),\sigma_2=(134)(25)$ for which $F_2(\sigma_2)\cdot F_2(\sigma_1) \neq F_2(\sigma_2 \cdot \sigma_1)$.
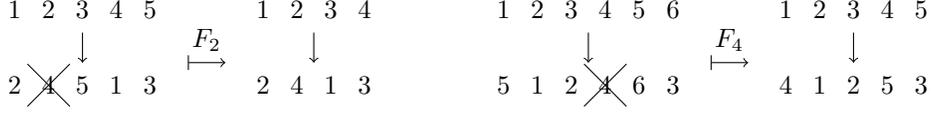
\begin{figure}
\begin{tikzpicture}
\coordinate (A) at (1,2);
\coordinate (GAPx) at (0.45,0);
\coordinate (GAPY) at (0,-1);
\coordinate (GAPxIMG) at (1.5,0);
\coordinate (GAPxPERM) at (2.5,0);
\coordinate (CROSSbl) at (-1.41/5,-1.41/5);
\coordinate (CROSSbr) at (1.41/5,-1.41/5);
\coordinate (CROSStl) at (-1.41/5,1.41/5);
\coordinate (CROSStr) at (1.41/5,1.41/5);
\coordinate (TOTSHIFT) at (6.5,0);

\node at (A) {$1$};
\node at ($(A)+(GAPx)$) {$2$};
\node at ($(A)+2*(GAPx)$) {$3$};
\node at ($(A)+3*(GAPx)$) {$4$};
\node at ($(A)+4*(GAPx)$) {$5$};

\draw[->] ($(A)+2*(GAPx)+(0,-0.30)$) -- ($(A)+2*(GAPx)+(0,-0.70)$);

\node at ($(A)+(GAPY)$) {$2$};
\node at ($(A)+(GAPx)+(GAPY)$) {$4$};

\draw[-] ($(A)+(GAPx)+(GAPY)+(CROSSbl)$) -- ($(A)+(GAPx)+(GAPY)+(CROSStr)$);
\draw[-] ($(A)+(GAPx)+(GAPY)+(CROSSbr)$) -- ($(A)+(GAPx)+(GAPY)+(CROSStl)$);

\node at ($(A)+2*(GAPx)+(GAPY)$) {$5$};
\node at ($(A)+3*(GAPx)+(GAPY)$) {$1$};
\node at ($(A)+4*(GAPx)+(GAPY)$) {$3$};

\draw[|->] ($(A)+4*(GAPx)+1/2*(GAPxIMG)-(0.25,0)-(0,0.70)$) -- ($(A)+4*(GAPx)+(0.25,0)+1/2*(GAPxIMG)-(0,0.70)$);
\node at ($(A)+4*(GAPx)+1/2*(GAPxIMG)-(0,0.40)$) {$F_2$};

\node at ($(A)+4*(GAPx)+(GAPxIMG)$) {$1$};
\node at ($(A)+4*(GAPx)+(GAPx)+(GAPxIMG)$) {$2$};
\node at ($(A)+4*(GAPx)+2*(GAPx)+(GAPxIMG)$) {$3$};
\node at ($(A)+4*(GAPx)+3*(GAPx)+(GAPxIMG)$) {$4$};

\draw[->] ($(A)+3/2*(GAPx)+(0,-0.30)+4*(GAPx)+(GAPxIMG)$) -- ($(A)+3/2*(GAPx)+(0,-0.70)+4*(GAPx)+(GAPxIMG)$);

\node at ($(A)+4*(GAPx)+(GAPY)+(GAPxIMG)$) {$2$};
\node at ($(A)+4*(GAPx)+(GAPx)+(GAPY)+(GAPxIMG)$) {$4$};
\node at ($(A)+4*(GAPx)+2*(GAPx)+(GAPY)+(GAPxIMG)$) {$1$};
\node at ($(A)+4*(GAPx)+3*(GAPx)+(GAPY)+(GAPxIMG)$) {$3$};


\node at ($(A)+(TOTSHIFT)$) {$1$};
\node at ($(A)+(GAPx)+(TOTSHIFT)$) {$2$};
\node at ($(A)+2*(GAPx)+(TOTSHIFT)$) {$3$};
\node at ($(A)+3*(GAPx)+(TOTSHIFT)$) {$4$};
\node at ($(A)+4*(GAPx)+(TOTSHIFT)$) {$5$};
\node at ($(A)+5*(GAPx)+(TOTSHIFT)$) {$6$};

\draw[->] ($(A)+5/2*(GAPx)+(0,-0.30)+(TOTSHIFT)$) -- ($(A)+5/2*(GAPx)+(0,-0.70)+(TOTSHIFT)$);

\node at ($(A)+(GAPY)+(TOTSHIFT)$) {$5$};
\node at ($(A)+(GAPx)+(GAPY)+(TOTSHIFT)$) {$1$};
\node at ($(A)+2*(GAPx)+(GAPY)+(TOTSHIFT)$) {$2$};
\node at ($(A)+3*(GAPx)+(GAPY)+(TOTSHIFT)$) {$4$};

\draw[-] ($(A)+3*(GAPx)+(GAPY)+(CROSSbl)+(TOTSHIFT)$) -- ($(A)+3*(GAPx)+(GAPY)+(CROSStr)+(TOTSHIFT)$);
\draw[-] ($(A)+3*(GAPx)+(GAPY)+(CROSSbr)+(TOTSHIFT)$) -- ($(A)+3*(GAPx)+(GAPY)+(CROSStl)+(TOTSHIFT)$);

\node at ($(A)+4*(GAPx)+(GAPY)+(TOTSHIFT)$) {$6$};
\node at ($(A)+5*(GAPx)+(GAPY)+(TOTSHIFT)$) {$3$};

\draw[|->] ($(A)+5*(GAPx)+1/2*(GAPxIMG)-(0.25,0)-(0,0.70)+(TOTSHIFT)$) -- ($(A)+5*(GAPx)+(0.25,0)+1/2*(GAPxIMG)-(0,0.70)+(TOTSHIFT)$);
\node at ($(A)+5*(GAPx)+1/2*(GAPxIMG)-(0,0.40)+(TOTSHIFT)$) {$F_4$};

\node at ($(A)+5*(GAPx)+(GAPxIMG)+(TOTSHIFT)$) {$1$};
\node at ($(A)+5*(GAPx)+(GAPx)+(GAPxIMG)+(TOTSHIFT)$) {$2$};
\node at ($(A)+5*(GAPx)+2*(GAPx)+(GAPxIMG)+(TOTSHIFT)$) {$3$};
\node at ($(A)+5*(GAPx)+3*(GAPx)+(GAPxIMG)+(TOTSHIFT)$) {$4$};
\node at ($(A)+5*(GAPx)+4*(GAPx)+(GAPxIMG)+(TOTSHIFT)$) {$5$};

\draw[->] ($(A)+2*(GAPx)+(0,-0.30)+5*(GAPx)+(GAPxIMG)+(TOTSHIFT)$) -- ($(A)+2*(GAPx)+(0,-0.70)+5*(GAPx)+(GAPxIMG)+(TOTSHIFT)$);

\node at ($(A)+5*(GAPx)+(GAPY)+(GAPxIMG)+(TOTSHIFT)$) {$4$};
\node at ($(A)+5*(GAPx)+(GAPx)+(GAPY)+(GAPxIMG)+(TOTSHIFT)$) {$1$};
\node at ($(A)+5*(GAPx)+2*(GAPx)+(GAPY)+(GAPxIMG)+(TOTSHIFT)$) {$2$};
\node at ($(A)+5*(GAPx)+3*(GAPx)+(GAPY)+(GAPxIMG)+(TOTSHIFT)$) {$5$};
\node at ($(A)+5*(GAPx)+4*(GAPx)+(GAPY)+(GAPxIMG)+(TOTSHIFT)$) {$3$};
\end{tikzpicture}
\caption{When the arc in position $i$ of the rearranged curve degenerates, $k-1$ arcs are left and a permutation on $S_{k-1}$ is induced.} \label{relabel}
\end{figure}

We now need to prove a combinatorial lemma, which will be the core of the inductive construction used in Proposition \ref{boundary} to prove the characterization of $(k,1)$-rearrangeability, when cuts are allowed (actually forced) to degenerate. 

\begin{Lemma} \label{ktokminus1}
For $\sigma \in S_k \setminus Z_k$ with $k \geq 4$ and $\sigma(1)=1$, there exists $i\in\{1,2,...,k\}$ such that $F_i(\sigma)\in S_{k-1} \setminus Z_{k-1}$.
\end{Lemma}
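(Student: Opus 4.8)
The plan is to argue by contrapositive: assuming $F_i(\sigma)\in Z_{k-1}$ for \emph{every} $i\in\{1,\dots,k\}$, I will show that $\sigma$ must be the identity (hence in $Z_k$), contradicting the hypothesis $\sigma\in S_k\setminus Z_k$. Throughout I would use the combinatorial description of $F_i$ from Figure~\ref{relabel}: writing $\sigma$ as the word $\sigma(1)\sigma(2)\cdots\sigma(k)$, the permutation $F_i(\sigma)$ is the word obtained by deleting the $i$-th letter and then relabelling the remaining values order-preservingly onto $\{1,\dots,k-1\}$. Two elementary facts about cyclic shifts do the work: (a) as a word, $z_h\in Z_{k-1}$ reads $(h{+}1,h{+}2,\dots,k{-}1,1,2,\dots,h)$, so all its consecutive differences are $+1$ except for a single descent between positions $k{-}1{-}h$ and $k{-}h$ (and no descent when $h=0$); (b) in particular the only element of $Z_{k-1}$ whose word begins with $1$ is the identity.

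First I would look at $F_k(\sigma)$ (delete the last letter). Since $\sigma(1)=1$ is the smallest value it survives the relabelling unchanged in the first position, so fact (b) forces $F_k(\sigma)=\mathrm{id}$; as the relabelling map is order preserving this says exactly $\sigma(1)<\sigma(2)<\cdots<\sigma(k-1)$. Next I would look at $F_1(\sigma)$ (delete the leading $1$), for which $F_1(\sigma)(j)=\sigma(j+1)-1$; by the monotonicity just established, the word of $F_1(\sigma)$ is strictly increasing on its first $k-2$ positions, so by fact (a) it can only be $z_0=\mathrm{id}$ or $z_1$. The case $F_1(\sigma)=\mathrm{id}$ gives $\sigma=\mathrm{id}\in Z_k$, a contradiction. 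The case $F_1(\sigma)=z_1$ determines $\sigma$ uniquely by solving $\sigma(j+1)-1=z_1(j)$: its word must be $(1,3,4,5,\dots,k,2)$.

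It then remains only to eliminate this single candidate, and here the hypothesis $k\ge4$ enters. Computing $F_2(\sigma)$ for $\sigma$ with word $(1,3,4,\dots,k,2)$ — delete the second letter $3$ and relabel — produces the word $(1,3,4,\dots,k-1,2)$ in $S_{k-1}$. It begins with $1$, so were it in $Z_{k-1}$ it would have to be the identity by fact (b); but for $k\ge4$ it has length $\ge3$ and second entry $3\ne2$, hence $F_2(\sigma)\notin Z_{k-1}$, the desired contradiction. (When $k=3$ the candidate word is $(1,3,2)$, whose $F_2$ lands in $S_2=Z_2$, which is exactly why $k\ge4$ is indispensable.) The routine part is just unwinding the index-shifts in the definition of $F_i$ to confirm that $F_k$, $F_1$, $F_2$ produce the words claimed; the one step requiring care is the ``first $k-2$ letters increasing'' argument, which must correctly cut $Z_{k-1}$ down to $\{\mathrm{id},z_1\}$, and then remembering to dispatch the leftover permutation with the \emph{third} deletion $F_2$ rather than only with the two boundary deletions $F_1$ and $F_k$.
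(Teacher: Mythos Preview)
Your proof is correct, but it proceeds by a genuinely different route than the paper. The paper argues directly: it locates the smallest index $r$ with $\sigma(r)\neq r$ and then, in three cases depending on whether $r>2$, or $r=2$ with $\sigma(2)\neq k$, or $r=2$ with $\sigma(2)=k$, it exhibits an explicit $i$ (namely $i=1$, $i=\sigma^{-1}(k)$, or $i=3$) for which $F_i(\sigma)$ still fixes $1$ but fails to fix some other index, hence is not the identity and therefore not in $Z_{k-1}$. You instead argue by contrapositive, using three \emph{fixed} deletions $i=k,1,2$ in succession to force $\sigma$ down to a single candidate word $(1,3,4,\dots,k,2)$ and then eliminate it. Both arguments hinge on the same observation---that the only cyclic shift beginning with $1$ is the identity---but the paper's version is constructive (it hands you an $i$ as a function of $\sigma$), while yours is more structural and has the pleasant side effect of isolating exactly why $k\geq 4$ is needed (your parenthetical about $k=3$). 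Either way the verification is a short unwinding of the definition of $F_i$, and your description of the words produced by $F_k$, $F_1$, and $F_2$ is accurate.
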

\begin{proof}
Let us consider the smallest $r \in \{1,2,...,k\}$ such that $\sigma(r) \neq r$, which exists since $\sigma$ is not the identical permutation. We set $i$ to the following values, distinguishing $3$ cases: 
\[
 i :=
  \left\{\begin{array}{llllr}
    1, & r>2 & & F_i(\sigma)(r-1)=\sigma(r)-1\neq r-1 &\\
    \sigma^{-1}(k), & r=2, \sigma(2)\neq k & \Rightarrow & F_i(\sigma)(2)=\sigma(2) \neq 2 &\\
    3, & r=2, \sigma(2)= k, & & F_i(\sigma)(2)=\sigma(2)-1=k-1>2. &
  \end{array}\right.
\]
Since for all three cases $F_i(\sigma)(1)=1$, this concludes the proof.
\end{proof}

\begin{prep} \label{boundary}
Let $\gamma(s)$ be a $C^1$ constant speed non-closed planar curve over $[0,1]$, whose turning angle function $\theta(s)$ satisfies $\theta(1)-\theta(0)=m 2 \pi$ with $ 0 \neq  m \in \mathbb{Z} $.  If $\sigma \in S_k$ with $4 \leq k$, then there exist cuts $C\in \partial(D_k) $ such that the rearranged curve $r_{\sigma,C}$ is closed $C^1$ if and only if $\sigma \in S_k \setminus Z_k$.
\end{prep}
\begin{proof}
Lemma \ref{notrearr} rules out permutations in $Z_k$ from the picture. By Lemma \ref{cyclic} proving rearrangeability with respect to $\sigma \cdot z_h$ for $z_h \in Z_k$ implies also rearrangeability with respect to $\sigma$. We can therefore assume, by possibly applying some cyclic shift, that $\sigma(1)=1$. By Lemma \ref{ktokminus1} we can find $i$ such that by taking cuts on $\partial D_k$ with $c_{\sigma(i)-1}=c_{\sigma(i)}$ a permutation in $S_{k-1} \setminus Z_{k-1}$ is induced. The statement follows by induction, once we observe that the base $k=4$ is guaranteed by Proposition \ref{sum} after one last contraction.
\end{proof}

We can finally prove Theorem \ref{char} by discussing the robustness of the argument we used in \S \ref{main}.
\begin{proof} [Proof of Theorem \ref{char}]
The case $k=3$ is implied by Proposition \ref{sum} after observing that cuts must be in the interior of $D_3$ since the curve we want to rearrange is not closed. If $k>3$, by Proposition \ref{boundary} we can find $q_1<q_2<q_3 \in \{0,1,2,...,k-1\}$ such that there exist cuts $\bar{C}=(\bar{c}_1,\bar{c}_2,...,\bar{c}_{k-1})$, satisfying $\bar{c}_j<\bar{c}_{j+1}$ for $j\in \{q_1,q_2,q_3\}$ and $\bar{c}_j=\bar{c}_{j+1}$ otherwise, and such that $r_{\sigma,\bar{C}}$ is closed $C^1$ (recall that we agreed on $\bar{c}_0=0$ and $\bar{c}_k=1$). We want now to inflate degenerate arcs, making them proper again, without undermining the  contraction argument we used for $S_3$. For $(l_1,l_2) \in D_3$ we let $ \delta(l_1,l_2) = \frac{1}{k-2}\min\{l_1,l_2-l_1,1-l_2\}$ and define the \textit{inflated} cuts $I[l_1,l_2]=(c_1,...,c_{k-1})\in D_k$ as
$$
c_j=\begin{cases}
j\delta(l_1,l_2), & j \leq q_1 \\
l_1+(j-(q_1+1))\delta(l_1,l_2), & q_1 < j \leq q_2 \\
l_2+(j-(q_2+1))\delta(l_1,l_2), & q_2 < j \leq q_3 \\
1 - (k-j)\delta(l_1,l_2), & j > q_3.
\end{cases}
$$

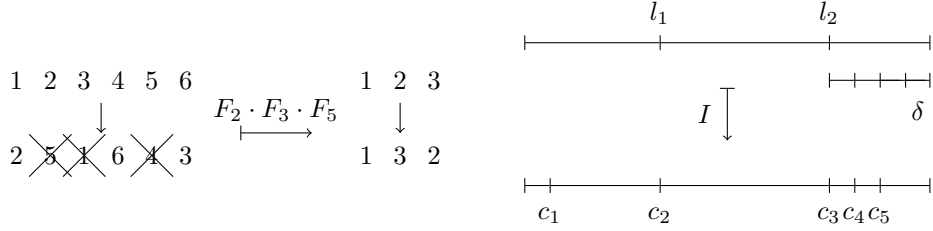
\begin{figure}
\begin{tikzpicture}
\coordinate (A) at (1,2);
\coordinate (GAPx) at (0.45,0);
\coordinate (GAPY) at (0,-1);
\coordinate (GAPxIMG) at (1.5,0);
\coordinate (GAPxPERM) at (2.5,0);
\coordinate (CROSSbl) at (-1.41/5,-1.41/5);
\coordinate (CROSSbr) at (1.41/5,-1.41/5);
\coordinate (CROSStl) at (-1.41/5,1.41/5);
\coordinate (CROSStr) at (1.41/5,1.41/5);
\coordinate (TOTSHIFT) at (6.5,0);

\node at (A) {$1$};
\node at ($(A)+(GAPx)$) {$2$};
\node at ($(A)+2*(GAPx)$) {$3$};
\node at ($(A)+3*(GAPx)$) {$4$};
\node at ($(A)+4*(GAPx)$) {$5$};
\node at ($(A)+5*(GAPx)$) {$6$};

\draw[->] ($(A)+2.5*(GAPx)+(0,-0.30)$) -- ($(A)+2.5*(GAPx)+(0,-0.70)$);

\node at ($(A)+(GAPY)$) {$2$};
\node at ($(A)+(GAPx)+(GAPY)$) {$5$};

\draw[-] ($(A)+(GAPx)+(GAPY)+(CROSSbl)$) -- ($(A)+(GAPx)+(GAPY)+(CROSStr)$);
\draw[-] ($(A)+(GAPx)+(GAPY)+(CROSSbr)$) -- ($(A)+(GAPx)+(GAPY)+(CROSStl)$);

\node at ($(A)+2*(GAPx)+(GAPY)$) {$1$};

\draw[-] ($(A)+2*(GAPx)+(GAPY)+(CROSSbl)$) -- ($(A)+2*(GAPx)+(GAPY)+(CROSStr)$);
\draw[-] ($(A)+2*(GAPx)+(GAPY)+(CROSSbr)$) -- ($(A)+2*(GAPx)+(GAPY)+(CROSStl)$);

\node at ($(A)+3*(GAPx)+(GAPY)$) {$6$};
\node at ($(A)+4*(GAPx)+(GAPY)$) {$4$};

\draw[-] ($(A)+4*(GAPx)+(GAPY)+(CROSSbl)$) -- ($(A)+4*(GAPx)+(GAPY)+(CROSStr)$);
\draw[-] ($(A)+4*(GAPx)+(GAPY)+(CROSSbr)$) -- ($(A)+4*(GAPx)+(GAPY)+(CROSStl)$);

\node at ($(A)+5*(GAPx)+(GAPY)$) {$3$};

\draw[|->] ($(A)+5.5*(GAPx)+1/2*(GAPxIMG)-(0.25,0)-(0,0.70)$) -- ($(A)+6.5*(GAPx)+(0.25,0)+1/2*(GAPxIMG)-(0,0.70)$);
\node at ($(A)+6*(GAPx)+1/2*(GAPxIMG)-(0,0.40)$) {$F_2 \cdot F_3 \cdot F_5$};

\node at ($(A)+7*(GAPx)+(GAPxIMG)$) {$1$};
\node at ($(A)+7*(GAPx)+(GAPx)+(GAPxIMG)$) {$2$};
\node at ($(A)+7*(GAPx)+2*(GAPx)+(GAPxIMG)$) {$3$};

\draw[->] ($(A)+4*(GAPx)+(0,-0.30)+4*(GAPx)+(GAPxIMG)$) -- ($(A)+4*(GAPx)+(0,-0.70)+4*(GAPx)+(GAPxIMG)$);

\node at ($(A)+7*(GAPx)+(GAPY)+(GAPxIMG)$) {$1$};
\node at ($(A)+7*(GAPx)+(GAPx)+(GAPY)+(GAPxIMG)$) {$3$};
\node at ($(A)+7*(GAPx)+2*(GAPx)+(GAPY)+(GAPxIMG)$) {$2$};


\coordinate (ANCHORINFL) at ($(A)+(0,0.5)$);
\coordinate (SHIFTDELTA) at (0,-0.5);

\draw[|-] ($(ANCHORINFL)+15*(GAPx)$) -- ($(ANCHORINFL)+19*(GAPx)$);
\node at ($(ANCHORINFL)+19*(GAPx)+(0,0.4)$) {$l_1$};
\draw[|-] ($(ANCHORINFL)+19*(GAPx)$) -- ($(ANCHORINFL)+24*(GAPx)$);
\node at ($(ANCHORINFL)+24*(GAPx)+(0,0.4)$) {$l_2$};
\draw[|-|] ($(ANCHORINFL)+24*(GAPx)$) -- ($(ANCHORINFL)+27*(GAPx)$);

\draw[|-] ($(ANCHORINFL)+24*(GAPx)+(SHIFTDELTA)$) -- ($(ANCHORINFL)+24*(GAPx)+3/4*(GAPx)+(SHIFTDELTA)$);
\draw[|-] ($(ANCHORINFL)+3/4*(GAPx)+24*(GAPx)+(SHIFTDELTA)$) -- ($(ANCHORINFL)+6/4*(GAPx)+24*(GAPx)+3/5*(GAPx)+(SHIFTDELTA)$);
\draw[|-] ($(ANCHORINFL)+6/4*(GAPx)+24*(GAPx)+(SHIFTDELTA)$) -- ($(ANCHORINFL)+9/4*(GAPx)+24*(GAPx)+3/5*(GAPx)+(SHIFTDELTA)$);
\draw[|-|] ($(ANCHORINFL)+9/4*(GAPx)+24*(GAPx)+(SHIFTDELTA)$) -- ($(ANCHORINFL)+27*(GAPx)+(SHIFTDELTA)$);
\node at ($(ANCHORINFL)+21/8*(GAPx)+24*(GAPx)+(SHIFTDELTA)+(0,-0.4)$) {$\delta$};

\draw[|->] ($(ANCHORINFL)+21*(GAPx)+(0,-0.6)$) -- ($(ANCHORINFL)+21*(GAPx)+(0,-1.3)$);
\node at ($(ANCHORINFL)+21*(GAPx)+(0,-0.95)+(-0.3,0)$) {$I$};

\draw[|-] ($(ANCHORINFL)+15*(GAPx)+(0,-1.9)$) -- ($(ANCHORINFL)+15*(GAPx)+3/4*(GAPx)+(0,-1.9)$);
\node at ($(ANCHORINFL)+15*(GAPx)+3/4*(GAPx)+(0,-1.9)+(0,-0.4)$) {$c_1$};
\draw[|-] ($(ANCHORINFL)+3/4*(GAPx)+15*(GAPx)+(0,-1.9)$) -- ($(ANCHORINFL)+19*(GAPx)+(0,-1.9)$);
\node at ($(ANCHORINFL)+19*(GAPx)+(0,-1.9)+(0,-0.4)$) {$c_2$};
\draw[|-] ($(ANCHORINFL)+19*(GAPx)+(0,-1.9)$) -- ($(ANCHORINFL)+24*(GAPx)+(0,-1.9)$);
\node at ($(ANCHORINFL)+24*(GAPx)+(0,-1.9)+(0,-0.4)$) {$c_3$};
\draw[|-] ($(ANCHORINFL)+24*(GAPx)+(0,-1.9)$) -- ($(ANCHORINFL)+24*(GAPx)+3/4*(GAPx)+(0,-1.9)$);
\node at ($(ANCHORINFL)+24*(GAPx)+3/4*(GAPx)+(0,-1.9)+(0,-0.4)$) {$c_4$};
\draw[|-] ($(ANCHORINFL)+3/4*(GAPx)+24*(GAPx)+(0,-1.9)$) -- ($(ANCHORINFL)+24*(GAPx)+6/4*(GAPx)+(0,-1.9)$);
\node at ($(ANCHORINFL)+24*(GAPx)+6/4*(GAPx)+(0,-1.9)+(0,-0.4)$) {$c_5$};
\draw[|-|] ($(ANCHORINFL)+6/4*(GAPx)+24*(GAPx)+(0,-1.9)$) -- ($(ANCHORINFL)+27*(GAPx)+(0,-1.9)$);

\end{tikzpicture}
\caption{Indices are repeatedly contracted from a permution in $S_6$ to obtain a permutation in $S_3$. Such indices determine how degenerate cuts are inflated and turned into proper ones.} \label{contrex}
\end{figure}

Figure \ref{contrex} visualizes the inflation operation. Note that if $(l_1,l_2)\in\text{int}(D_3)$ then $I[l_1,l_2]\in \text{int}(D_k)$. Further, no inflation happens if $(l_1,l_2)\in\partial(D_3)$. What we are doing here is to push the interior of the triangular face $c_j=c_{j+1}$ for $j\notin \{q_1,q_2,q_3 \}$, which can be thought as a copy of $D_3$, towards the interior of $D_k$. Because of Proposition \ref{cyclic} we can assume the permutation induced on $S_3$ is the one swapping arc $2$ and $3$. We proceed as in \S \ref{main}, by considering the loop $e_\sigma(I[0,t])$ defined on $[0,1]$. Since no inflation is happening on $\partial D_3$, this is exactly the same loop contained in a circle and with winding number $m2\pi$ we described in the proof of Theorem \ref{thm1}, which we know being contained and not contractible in $\mathbb{R}^2 \setminus \{ (0,0) \} $. For $h\in[0,1]$ the family of loops $e_\sigma(I[h,t])$ defined on $[h,1]$ is a contraction to a point different from $(0,0)$, which implies the existence of $(h,t)\in \text{int}(D_3)$ such that $e_\sigma(I[h,t])=(0,0)$.

\end{proof}

\begin{figure}
\begin{tikzpicture}
    \node[anchor=south west,inner sep=0] at (0,0) {\scalebox{0.92}{\includegraphics[width=\textwidth]{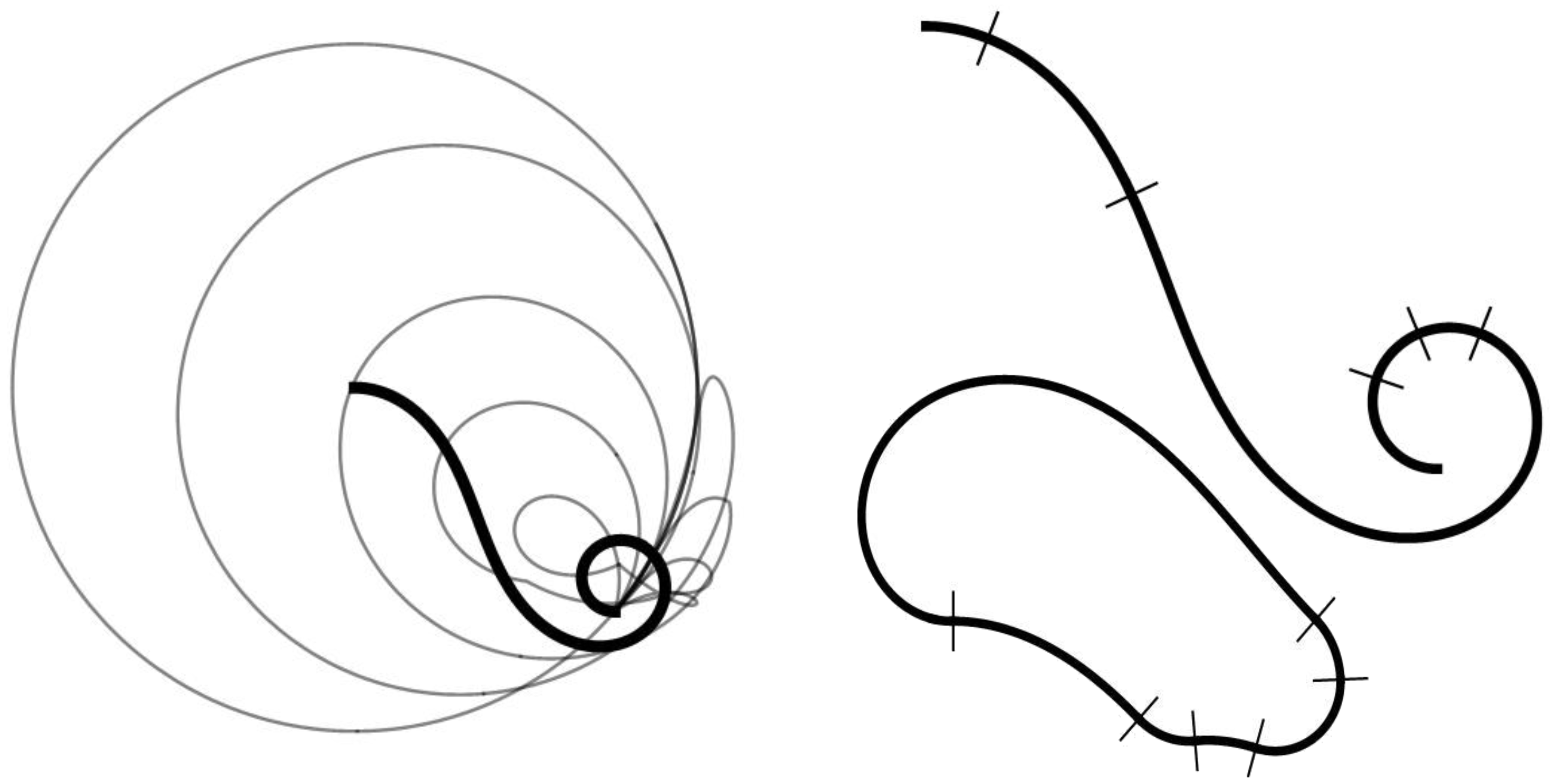}}};
    \node at (7.1,5.35) {$\gamma_1$};
      \node at (8.3,5.25) {$\gamma_2$};
      \node at (11.5,1.85) {$\gamma_3$};
      \node at (10.9,3.65) {$\gamma_4$};
      \node at (10.25,3.4) {$\gamma_5$};
      \node at (10.15,2.45) {$\gamma_6$};
      
      \node at (9.2,0.55) {$\gamma_1$};
      \node at (7.8,0.75) {$\gamma_2$};
      \node at (8.7,0.1) {$\gamma_5$};
      \node at (10.2,0.3) {$\gamma_6$};
      \node at (10.28,1.05) {$\gamma_4$};
      \node at (7.5,3.3) {$\gamma_3$};
      
      \node at (3.2,4) {$e_\sigma(I[h,t])$};
\end{tikzpicture}
\caption{Proper rearrangement of a curve split into $6$ arcs. The topological argument is the same as in \S \ref{main}: $e_\sigma(I[h,t])$ defines a family of loops starting as a circle centered in $(0,0)$ and contracting to $\gamma(1) \neq (0,0)$, which implies the starting point of the curve is contained in one of the intermediate loops of this family.} \label{inflationex}
\end{figure}

Figure \ref{inflationex} shows how a curve split into $6$ arcs can be properly rearranged. One can be a bit more precise about the maximum magnitude of the inflation factor $\delta$ exploiting uniform continuity of $e_\sigma$ on $D_k$. Nevertheless, we preferred the slightly less informative but leaner proof we gave above. 

The proof of Theorem \ref{char}, in the way we made sure that the perturbation of our starting cuts would not undermine the features of the loop $e_\sigma(I[0,t])$, presents one further analogy with the topological proof of the fundamental theorem of algebra. If $\bar{p}(z)=z^n$ then it is apparent that the loop $\bar{p}(\rho e^{it})$ for $\rho>0$ winds around the origin $n$ times, which is a property of little use since it is obvious where the roots of such a polynomial are. If $\bar{p}$ is changed to $p(z)=z^n+a_{n-1}z^{n-1}+...+a_1z+a_0$ though, the property about the winding number remains unalterd if $\rho$ is chosen large enough to make negligible the contribution of the terms of lower degree and therefore the contraction argument can still be used. The idea of exploiting the robustness of a topological argument, proven to work for a degenerate case, is also reminescent of the proof of the converse of the so called $4$-vertex theorem by Gluck \cite{gluck}, generalized on the same line some 30 years later by Dahlberg \cite{dahlberg}. A nice survey on this theorem and its proof(s) can be found in \cite{converse4}, where also the above observation about the fundamental theorem of algebra is pointed out.

\section{Some comments on higher dimensions and future work.} \label{highersec} 

A viable technique for showing $(k,0)$-rearrangeability in $\mathbb{R}^n$ would be to proceed in a manner analogous to the 2D case by assessing the contractibility in $\mathbb{R}^n \setminus \{ O \}$ of the image of $\partial D_k$ through $e_\sigma$.
If $e_\sigma (\partial D_k)$ contains the starting point $O$ then we have already achieved $(k-1,0)$-rearrangeability. Otherwise, up to homeomorphism a map $\partial D_k \cong S^{k-2} \rightarrow \mathbb{R}^n \setminus \{ O \}$ is induced. If this map is not contractible (it represents a non-trivial element in the $(k-2)$-th homotopy group of $\mathbb{R}^n \setminus \{ O \} \simeq S^{n-1}$) then there exist cuts in the interior of $D$ that provides $(k,0)$-rearrangeability. Nevertheless, in this setting the current lack of nice criteria to detect contractibility of the map induced by a certain permutation makes it hard to translate such considerations into explicit statements about rearrangeability. 

We conclude with pointing out a few possibilities for future work. A full characterization of planar curves that are $(k,0)$-rearrangeable or $(k,1)$-rearrangeable is an obvious next step. For higher dimensions it would be relevant to better understand the combinatorics of the image of $e_\sigma (\partial D_k)$ and to develop at least some neat sufficient conditions which guarantee that a curve is $(k,j)$-rearrangeable. $C^2$ curves in $\mathbb{R}^3$ whose curvature is constant are a promising family to study in this direction; in such class, if the curve is framed with the Frenet-Serret frame, the rearrangement procedure described in \S \ref{more} would provide $C^2$ regularity also at junction points.

\section{Acknowledgments.} The author acknowledges the support of the Austrian Science Fund (FWF): W1230, ``Doctoral Program Discrete Mathematics'' and of SFB-Transregio 109 ``Discretization in Geometry \& Dynamics'' funded by DFG and FWF (I2978). \\ The author would also like to thank Thilo R{\"o}rig and Johannes Wallner for helpful discussions and the Institut f{\"u}r Mathematik at TU Berlin for kind hospitality at the time this work was begun. 

\bibliography{bibliography} 
\bibliographystyle{amsplain}

\end{document}